\documentclass{amsart}
\usepackage[utf8]{inputenc}
\usepackage[T1]{fontenc}
\usepackage{lmodern}
\usepackage{amsfonts, amsmath, amssymb,amsthm,enumitem,mathtools,graphicx,array,tikz-cd,microtype}
\usepackage[all]{xy}
\setenumerate[0]{label=(\roman*)}
\usetikzlibrary{fit,backgrounds,3d,matrix,arrows,chains}
\numberwithin{equation}{subsection} 

\theoremstyle{plain}
\newtheorem{lemma}{Lemma}[section]
\newtheorem{theorem}[lemma]{Theorem}
\newtheorem{corollary}[lemma]{Corollary}
\newtheorem{proposition}[lemma]{Proposition}
\theoremstyle{definition}

\newtheorem{definition}[lemma]{Definition}
\newtheorem{remark}[lemma]{Remark}
\newtheorem{example}[lemma]{Example}

\newtheorem{observation}[lemma]{Observation}

\newcommand{\QQ}{\mathbf Q}

\newcommand{\RR}{\mathbf R}
\newcommand{\ZZ}{\mathbf Z}
\newcommand{\FF}{\mathbf F}
\renewcommand{\P}{\mathbf P}
\newcommand{\frob}{\mathrm{Frob}}
\DeclareMathOperator{\codim}{codim}
\DeclareMathOperator{\im}{im}
\DeclareMathOperator{\cone}{cone}
\newcommand{\HH}{\mathrm{H}}
\newcommand{\CH}{{\mathrm{CH}}}
\newcommand{\R}{{\mathrm{R}}}
\newcommand{\GL}{\mathrm{GL}}
\newcommand{\Gr}{\mathrm{Gr}}
\newcommand{\DL}{\mathrm{DL}}
\newcommand{\Fl}{\mathrm{Fl}}
\newcommand{\mfS}{\mathfrak{S}}
\newcommand{\PG}{\mathrm{PG}}
\newcommand{\Kly}{\mathrm{Kly}}
\DeclareFontFamily{U}{mathc}{}
\DeclareFontShape{U}{mathc}{m}{it}%
{<->s*[1.03] mathc10}{}
\DeclareMathAlphabet{\mathcal}{U}{mathc}{m}{it}
\newcommand{\sco}{\mathcal O}
\newcommand{\sci}{\mathcal I}
\newcommand{\scj}{\mathcal J}
\newcommand{\on}{\operatorname}
\renewcommand{\setminus}{\smallsetminus}
\newcommand{\lto}{\xymatrix@1@=15pt{&\ar[l]}}
\newcommand{\lra}{\xymatrix@1@=15pt{\ar[r]&}}
\renewcommand{\mapsto}{\xymatrix@1@=15pt{\ar@{|->}[r]&}}
\newcommand{\mapslto}{\xymatrix@1@=15pt{&\ar@{|->}[l]&}}
\renewcommand{\twoheadrightarrow}{\xymatrix@1@=18pt{\ar@{->>}[r]&}}

\renewcommand{\hookrightarrow}{\xymatrix@1@=15pt{\ar@{^(->}[r]&}}
\newcommand{\hook}{\xymatrix@1@=15pt{\ar@{^(->}[r]&}}

\renewcommand{\emptyset}{\varnothing}
\newcommand{\isoto}{\xymatrix@1@=15pt{\ar[r]^-\sim&}}

\usepackage[pdftex,hyperindex=true, bookmarks=true, bookmarksopen=true, bookmarksnumbered=true,colorlinks=true,citecolor=blue,linkcolor=blue,urlcolor=blue,]{hyperref}
\makeatletter \def\blfootnote{\xdef\@thefnmark{}\@footnotetext} \makeatother

\title[The $q$-Klyachko algebra and geometry]{Deligne--Lusztig varieties, toric orbifolds, and the $q$-Klyachko algebra}
\author{Ruizhen Liu}
\address{Department of Mathematics, University of Toronto, Toronto, Ontario M5S 2E4, Canada}
\email{ruizhen.liu@mail.utoronto.ca}
\date{}
\begin{document}

\begin{abstract}
    We investigate the geometry behind the $q$-Klyachko algebra, introduced by Nadeau--Tewari. When $q$ is a prime power, we show that the $q$-Klyachko algebra is the image of the pullback map on Chow rings $\mathrm{CH}(\mathrm{Fl}_{n+1})\to\mathrm{CH}(\mathrm{DL}_n)$, where $\mathrm{DL}_n\subseteq \mathrm{Fl}_n$ is a compactified Deligne--Lusztig variety inside the complete flag variety $\mathrm{Fl}_{n+1}$. When $q$ is a positive rational number, we establish a K{\"a}hler package for the $q$-Klyachko algebra through inputs from toric geometry.
\end{abstract}
\maketitle
\pagestyle{headings}
\section{Introduction}
Fix $q\in \QQ_{>0}$ and a natural number $n\in \ZZ_{>0}$. In~\cite{NadeauTewari}, Nadeau and Tewari considered:
\begin{definition}
    The \emph{$q$-Klyachko algebra} $\Kly_{n,q}$ is the finite $\QQ$-algebra \[
        \Kly_{n,q} \coloneqq \QQ[u_1,\cdots,u_n]/\langle\,(q+1) u_i^2 = u_i u_{i+1} + q u_i u_{i-1}, \,\text{for all $i=1,\,\ldots,\,n$}\,\rangle,
    \]
    with the convention $u_0=u_{n+1}=0$. It is equipped with a degree map $\deg_{n,q}\colon \Kly^{n}_{n,q}\to \QQ$ by sending $u_1\cdots u_n$ to $(n)_q! = \prod_{i=1}^n \frac{q^i-1}{q-1}$.
\end{definition}
The set of square-free monomials in $u_i$ form a basis for $\Kly_{n,q}$. Thus the degree map is well-defined. The purpose of the present note is to study the geometry behind the $q$-Klyachko algebra.
\subsection{Motivation from probability theory}\label{sec:motivation-probability}
Consider a
compactly supported point measure on the set of integers of the
form $\eta =
\sum_{i\in \ZZ} c_i
\delta_i$, where $c_i\in \ZZ_{\ge 0}$ vanishes for all but finitely many $i$, and $\delta_i$ is the Dirac measure at $i$. 
Fix $q_L,\,q_R\in \QQ_{>0}$ with $q_L+q_R=1$. We consider the following \emph{random displacement} rule. Whenever $\eta(i)\ge 2$ for
some $i$, we set \[
    \eta \mapsto \eta - \delta_i + \delta_{i-1}\text{ with probability $q_L$, or }\eta
    -\delta_i+\delta_{i+1}\text{ with probability $q_R$.}
\]
That is, we move $1$ unit of the mass at $i$ leftward with probability $q_L$ and rightward with
probability $q_R$. See Figure~\ref{fig:probability} for an illustration. A point measure with
$\eta(i)\in\{0,1\}$ for all $i\in \ZZ$ is invariant under this displacement rule.

Let $I\subset \ZZ$ be a finite subset. Denote by $p(I;\eta)$ the probability that the point measure
$\eta$ transforms to the point measure $\sum_{i\in I} \delta_i$ after running the random displacement process above for
sufficiently many steps. Here, at every step, we select one $i$ with $\eta(i)\ge 2$ according to some rule.

In~\cite[\S5]{NadeauTewari}, Nadeau and Tewari showed that the structure constants for the square
monomial basis of $\Kly_{n,q}$, known as \emph{remixed Eulerian numbers}, compute the probability $p(I;\eta)$. More precisely, setting $q =
q_L / q_R$, if $\eta$ is supported on $[n]$ and is such that $\sum_{1\le i \le n} \eta(i) = n$, then
we have
\[
    p([n];\eta) = \frac{1}{(n)_q!}\deg_{n,q} \prod_i u_i^{\eta(i)}.
\]
Moreover, this holds for any rule of selecting $i$ to update.
\begin{figure}
    \centering
    \includegraphics{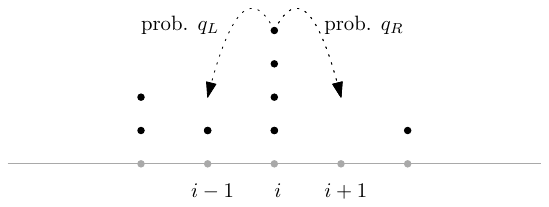}
    \caption{An application of the random displacement rule at $i$.}
    \label{fig:probability}
\end{figure}

\subsection{Statements of the main results}

The first geometric object related to $\Kly_{n,q}$ is a Deligne--Lusztig variety.
Fix a finite field $k = \FF_q$ of order $q$, in addition to an algebraic closure $\bar k$. Denote by $\Fl_{n+1}$ the complete flag variety over $\bar k$. Let $\DL_n$ be the compactified Deligne--Lusztig variety~(\S\ref{sec:DL}) associated to a Coxeter element and the $q$-Frobenius. The space $\DL_n$ is a smooth projective subvariety of $\Fl_{n+1}$; let $\iota\colon \DL_n\to \Fl_{n+1}$ be the evident embedding.

\begin{theorem}\label{th:algebra-isoms}
    For $1\le i\le n$, we set $L_i$ to be the pullback of line bundle on $\Fl_{n+1}$ associated to
    the $i$\/th fundamental weight for the type $A_n$ root system. Then, we have the following isomorphism of graded $\QQ$-algebras, \[
    \epsilon\colon \Kly_{n,q} \isoto \im(\iota^*\colon \CH(\Fl_{n+1})\to\CH(\DL_n)),\quad u_i \mapsto \iota^\ast c_1(L_i).\]
\end{theorem}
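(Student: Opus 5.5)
The plan has two parts: verify that the Klyachko relations hold for the classes $\iota^\ast c_1(L_i)$, which gives a well-defined surjection onto the image, and then prove injectivity by comparing top degrees through a Poincaré-duality argument on $\Kly_{n,q}$.

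\emph{Setup.} Recall that $\DL_n$ parametrises complete flags $V_1\subset\cdots\subset V_n$ in $\bar k^{n+1}$ with $F(V_i)\subseteq V_{i+1}$ for all $i$, where $F$ is the $q$-Frobenius. It is smooth projective of dimension $n$, and its boundary divisors $D_i=\{V_i=F(V_i)\}$ meet transversally. Since $\CH(\Fl_{n+1})$ is generated by $c_1(L_1),\dots,c_1(L_n)$, the image of $\iota^\ast$ is generated by the classes $\ell_i=\iota^\ast c_1(L_i)$. Hence $u_i\mapsto \ell_i$ defines a surjection $\QQ[u_1,\ldots,u_n]\to\im\iota^\ast$, and it suffices to check two things: the defining relations of $\Kly_{n,q}$ map to zero, and the induced map is injective.

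\emph{Relations.} We have $L_i=\det V_i^{\vee}$ up to sign convention, so $\ell_i=-c_1(\det V_i)$ on $\DL_n$. On the open stratum $V_{i+1}=V_i+F(V_i)$, and $F^\ast\det V_i\cong(\det V_i)^{\otimes q}$. The sequence
\[
0\to V_{i-1}\to V_i\oplus F(V_i)\to V_{i+1}\to 0,
\]
valid where $V_i\cap F(V_i)=V_{i-1}$, i.e.\ off $D_i$, suggests the divisor relation
\[
[D_i]=(q+1)\ell_i-\ell_{i-1}-\ell_{i+1}
\]
up to sign. I would prove this by exhibiting a section: the composite $\det V_{i-1}\otimes\det V_{i+1}\to\det V_i\otimes F^\ast\det V_i$, obtained from the sequence, vanishes exactly on $D_i$ and transversally there. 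The relation $\ell_i\cdot[D_i]=q\,\ell_{i-1}[D_i]$, or its mirror, then follows by restricting to $D_i$, where $V_i=F(V_i)$ is $\FF_q$-rational. Then $D_i$ is a union of products $\DL_{i-1}\times\DL_{n-i}$ over rational $i$-planes. On $D_i$ the bundle $\det V_i$ is rational, so its class is $q$-torsion-trivially related: $F^\ast\det V_i\cong\det V_i$ gives $(q-1)c_1(\det V_i)=0$ rationally, i.e.\ $\ell_i|_{D_i}=0$. That yields $\ell_i[D_i]=0$, hence $(q+1)\ell_i^2=\ell_i\ell_{i+1}+q\,\ell_i\ell_{i-1}$ once the correct coefficients of $D_i$ are pinned down. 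Getting the asymmetric weights $1$ and $q$ right requires care: a $q$ arises because $F$ multiplies $c_1$ by $q$, placing the factor on the $\ell_{i-1}$ side. I would double-check the relation by computing in the case $n=1$, where $\DL_1=\P^1$ and $\ell_1^2=0$ is consistent.

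\emph{Injectivity.} $\Kly_{n,q}$ is a Poincaré duality algebra for the degree map; Nadeau--Tewari show its pairing is nondegenerate. By the standard socle argument, a graded surjection $\Kly_{n,q}\to A$ that is nonzero in top degree is injective. Indeed, any nonzero kernel element $x$ admits a $y$ with $xy$ a nonzero multiple of $u_1\cdots u_n$; this product lies in the kernel, forcing the top degree to map to zero. So it suffices to compute $\int_{\DL_n}\ell_1\cdots\ell_n=(n)_q!\neq0$. I would do this by induction using the divisor relation: express $\ell_n$, or $\ell_1$, through $[D_i]$ and restrict to boundary strata $\DL_{i-1}\times\DL_{n-i}$. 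The count of rational $i$-planes is the $q$-binomial coefficient, which produces the $q$-factorial recursion.

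\emph{Main obstacle.} I expect the main obstacle to be this degree computation: tracking exact coefficients and signs of the boundary divisors and matching the recursion to $(n)_q!$. Nondegeneracy of the pairing on $\Kly_{n,q}$ I would quote from Nadeau--Tewari, or deduce from the square-free monomial basis.
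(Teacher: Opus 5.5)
Your route differs from the paper's, which never uses the flag description. The paper identifies $\DL_n$ with the wonderful model of $\PG(n,q)$ (Langer) and writes $L_i$ and the hypersimplex classes $\gamma_i$ in Feichtner--Yuzvinsky generators to get $\gamma_i=q^iL_i$. It then imports Katz--Kutler's quadratic relations and degree formula for perfect matroid designs. Your intrinsic argument can be made to work, but its central step is wrong as written.

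\textbf{The error.} Off $D_i$ one has $V_i\cap F(V_i)=F(V_{i-1})$, not $V_{i-1}$. On the open stratum, $V_{i-1}=\langle v,\dots,F^{i-2}v\rangle$ and $F(V_i)=\langle Fv,\dots,F^{i}v\rangle$, and $v\notin F(V_i)$ because $v,Fv,\dots,F^nv$ are independent. So your exact sequence does not exist. The symmetric class $(q+1)\ell_i-\ell_{i-1}-\ell_{i+1}$ it suggests is false for $i\ge 2$; the sequence itself has to change, not just its coefficients.

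\textbf{The fix.} Since $F(V_{i-1})\subseteq V_i$ on all of $\DL_n$, the map $w\mapsto(w,-w)$ makes $F(V_{i-1})$ a subbundle of $V_i\oplus F(V_i)$. The sum map then descends to $(V_i\oplus F(V_i))/F(V_{i-1})\to V_{i+1}$, a map of rank-$(i+1)$ bundles that is an isomorphism exactly where $V_i\neq F(V_i)$. Its determinant is a nonzero section $s_i$ of $\det V_{i+1}\otimes\det F(V_{i-1})\otimes(\det V_i\otimes\det F(V_i))^{-1}$. Using $\det F(V_j)\cong(\det V_j)^{\otimes q}$, its class is $(q+1)\ell_i-\ell_{i+1}-q\,\ell_{i-1}$. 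The $q$ does land on $\ell_{i-1}$, as you guessed, but it comes from $F(V_{i-1})$.

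\textbf{Simplifications.} You do not need transversality of $s_i$. The divisor $\mathrm{div}(s_i)$ is effective and supported on $D_i$, and every component of $D_i$ is contracted by $\pi_i$ to a rational point of $\Gr(i,n+1)$. Hence $\ell_i\cdot\mathrm{div}(s_i)=0$ whatever the multiplicities, and this is exactly $(q+1)\ell_i^2=\ell_i\ell_{i+1}+q\,\ell_i\ell_{i-1}$. Drop the claim $\ell_i[D_i]=q\,\ell_{i-1}[D_i]$: it is false, since $\ell_{i-1}$ does not restrict to zero on $D_i$.

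The step you flag as the main obstacle is also unnecessary. The map $\Kly_{n,q}\to\im\iota^\ast$ is a graded surjection whose degree-$n$ target contains $\ell_1^n$. Moreover $\int_{\DL_n}\ell_1^n=1$, because $\ell_1=\pi_1^\ast c_1(\sco(1))$ and $\pi_1$ is birational. So the top degree maps nontrivially, and your socle argument gives injectivity without evaluating $\int\ell_1\cdots\ell_n$. For the Poincar\'e duality input, the square-free basis shows $\Kly_{n,q}$ is Artinian. Hence its $n$ defining quadrics form a regular sequence, and it is a Gorenstein complete intersection with socle in degree $n$.

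\textbf{Comparison.} With these repairs your proof is more self-contained than the paper's. It uses only that $\DL_n$ is smooth, lies in $\{F(V_i)\subseteq V_{i+1}\}$, and maps birationally to $\P^n$, and it explains the asymmetric weights geometrically. It also lands directly on the indexing of the statement, with $\ell_i=c_1(\det V_i^\vee)$. In the paper's indexing, where $L_n=\alpha$, the relation comes out as $(q+1)L_i^2=qL_iL_{i+1}+L_iL_{i-1}$, so its proof must reverse indices. What the paper's route buys is the link with the matroid Chow ring of $\PG(n,q)$, answering Katz--Kutler. It also gives the exact identification of the degree map with $\deg_{n,q}$, which your argument gives only up to a scalar.
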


Our argument relies on a result of Langer~\cite{Langer}, in which he identified the $\DL_n$ as the
de Concini--Procesi wonderful model for the arrangement $\PG(n,q)$ of all $\FF_q$-hyperplanes, so
$\CH(\DL_n)$ is isomorphic to the matroidal Chow ring $\CH(\PG(n,q))$ in the sense of
Feichtner--Yuzvinsky~\cite{FeichtnerYuzvinsky}. If no confusion arises, we also denote by $\PG(n,q)$ the matroid
associated to the arrangement.

\begin{remark}
    \begin{enumerate}
        \item In \cite[\S7]{NadeauTewari}, Nadeau and Tewari related the $q$-Klyachko algebra $\Kly_{n,q}$ to the Schubert cycle expansion of the variety $\DL_n$. This is an illustration of Theorem~\ref{th:algebra-isoms} at degree $n$.
        \item In~\cite[Remark~7.13]{KatzKutler}, Katz and Kutler exhibited a ring map $\Kly_{n,q}\to\CH(\PG(n,q))$ and asked about the geometrisation of this map. This map is, up to change of variables, given by $\epsilon$. Theorem~\ref{th:algebra-isoms}, therefore, provides a positive answer to the question of Katz--Kutler.
    \end{enumerate}
\end{remark}

In a different direction, we establish a second geometric interpretation for the $q$-Klyachko algebra, valid
even when $q$ is not a prime power. 
\begin{theorem}\label{thm:toric-isom}
    For any positive rational number $q$, there exists a projective simplicial fan $\Sigma_{n,q}$ (\S3) such that the cohomology ring $\HH^\bullet(X_{\Sigma_{n,q}},\QQ)$ of the associated toric variety $X_{\Sigma_{n,q}}$ is isomorphic to $\Kly_{n,q}$ as graded $\QQ$-algebras. Under this isomorphism, the ample cone of $X_{\Sigma_{n,q}}$ contains the cone \[
    \mathcal{K}_{n,q} \coloneqq \RR_{>0}\,u_1 + \cdots + \RR_{>0}\,u_n\subseteq \Kly_{n,q,\RR}^1.
    \]
\end{theorem}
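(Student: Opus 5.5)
The plan is to build the fan $\Sigma_{n,q}$ from the Coxeter permutohedral fan of type $A_n$, but with the rays placed at $q$-dependent positions. For $q=1$ the Klyachko algebra is the cohomology of the permutohedral toric variety. There the $u_i$ correspond to the nef classes pulled back from the fundamental weights, and the rays are indexed by nonempty proper subsets $S\subseteq[n+1]$, i.e.\ by the primitive vectors $e_S$.

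I will keep the combinatorial type of the fan, with cones indexed by chains of subsets, and deform the ray generators to $v_S$ with $q$-weighted coordinates. Concretely, in $N_\QQ=\QQ^{n+1}/\QQ(1,\dots,1)$, or in coordinates indexed by simple roots, I would take $v_S$ to be a suitable combination of the basis vectors $e_j$, $j\in S$, with weights $q^{j}$ adjusted by position. The weights are chosen so that the piecewise-linear functions $\varphi_i$ corresponding to $u_i$ satisfy the relation $(q+1)u_i^2=u_iu_{i+1}+qu_iu_{i-1}$. Since the fan is complete and simplicial, $X_{\Sigma_{n,q}}$ is a projective toric orbifold. Its rational cohomology is the Stanley--Reisner ring modulo linear relations, which equals $\QQ[\Sigma]$ modulo the global linear functions. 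I will identify this with the ring of piecewise polynomials modulo linear functions, as in Brion's description.

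The argument has four steps.
\begin{enumerate}
\item Define $u_i$ as the class of the conewise-linear function $\varphi_i$ that is linear on each chamber and corresponds to the $i$th fundamental weight; in chamber coordinates it is $\varphi_i(x)=\min$ over $i$-subsets of suitably weighted sums. Show that the $u_i$ generate $\HH^2$. Since $\dim \HH^2=\#\text{rays}-n$, this can fail, so instead I would realise $\Kly_{n,q}$ via a smaller fan. The better choice is a fan with $2^{n}-1$-type combinatorics, or simply a fan whose $\HH^\bullet$ has total dimension $2^n$, matching the square-free monomial basis. The natural candidate is the fan whose maximal cones correspond to subsets of $[n]$, a $q$-deformed version of the ``Klyachko'' toric variety built from $n$ iterated $\P^1$-type choices, i.e.\ a toric orbifold with $2n$ rays $\rho_i^{\pm}$ and $2^n$ maximal cones.
\item With $2n$ rays $D_i^{+},D_i^{-}$, the Stanley--Reisner relations are $D_i^+D_i^-=0$ and there are $n$ linear relations. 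I will choose ray vectors $v_i^{+}=e_i$ and $v_i^-$ with $v_i^- = -(q+1)e_i+e_{i+1}+qe_{i-1}$ (up to ordering of $q$ and $1$) so that the linear relation eliminates $D_i^-$ as $(q+1)u_i-u_{i+1}-qu_{i-1}$ with $u_i=D_i^+$. Then $D_i^+D_i^-=0$ becomes exactly the defining relation of $\Kly_{n,q}$, giving the isomorphism as graded algebras, provided the cones spanned by $\{v_i^{\epsilon_i}\}$ form a complete simplicial fan.
\item Prove completeness. The matrix with columns $v_i^-$ is minus a $q$-deformed Cartan matrix, which is tridiagonal with positive inverse. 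A point $x=\sum a_ie_i$ lies in the cone indexed by sign vector $\epsilon$ via a unique ``complementarity'' decomposition. This is a linear complementarity problem with a P-matrix, since all principal minors of the $q$-Cartan matrix are positive $q$-integers. By the Samelson--Thrall--Wesler theorem, such cones tile $\RR^n$ as a simplicial fan.
\item Prove projectivity and the ample cone claim. For $c=\sum c_iu_i$ with $c_i>0$, the support function is determined on rays by $\psi(v_i^+)=c_i$ and $\psi(v_i^-)=0$. I need to check strict convexity across each wall between $\epsilon$ and $\epsilon$ with $i$ flipped. The wall relation is $v_i^+\cdot(q+1)+v_i^-=\text{(neighbors)}$, so the convexity defect is $(q+1)c_i>0$ after the terms of the other rays lying in the wall cancel appropriately.
\end{enumerate}

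The degree check $\deg(u_1\cdots u_n)=(n)_q!$ is not part of the graded algebra isomorphism, and I would omit it, or verify it via the determinant of the $q$-Cartan matrix.

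The main obstacle is step 3, the fan property. I would first try the P-matrix/LCP theorem. If the ordering of the weights $q$ and $1$ breaks the required positivity, I would fall back on a direct induction on $n$ via projection along $e_n$.
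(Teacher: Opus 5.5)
Your final construction, once you drop the permutohedral idea, is the paper's fan $\Sigma_{n,q}$ with $A(n,q)$ replaced by $M=A(n,q)^{T}$ (equivalently, with indices reversed). Your step~3 is a sound and more self-contained alternative to the paper's route. Since $M$ has positive principal minors, the Samelson--Thrall--Wesler theorem gives both the fan property and completeness at once, whereas the paper asserts the intersection property in Lemma~\ref{lem:strong-convex-rational} and borrows completeness from Abe--Zeng.

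Step~2, however, has the elimination backwards. With $v_i^+=e_i$, the linear relation attached to $e_i^\ast$ reads
\[
D_i^+=(q+1)D_i^- - D_{i-1}^- - q\,D_{i+1}^- .
\]
So it is $D_i^+$ that gets eliminated, and the Klyachko generators are $u_i=D_{n+1-i}^-$, the divisors of the Cartan-column rays (in the paper, $u_i\mapsto D_{-\alpha_i}$). With your choice $u_i=D_i^+$ the assignment is not a ring map. For $n=2$ one has $D_1^-=\frac{(q+1)D_1^++qD_2^+}{q^2+q+1}$, so $D_1^+D_1^-=0$ gives $(q+1)(D_1^+)^2=-q\,D_1^+D_2^+$. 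But $D_1^+D_2^+$ is the nonzero class of the fixed point of $\cone(e_1,e_2)$, so neither ordering of the Klyachko relation holds.

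Step~4 inherits this error, and its key claim is false for the classes you test. For $n=2$, the wall $\cone(e_1)$ has wall relation $(q+1)e_2+v_2^- - q\,e_1=0$. Hence $D_1^+\cdot C<0$, and $c_1D_1^++c_2D_2^+$ is not even nef when $c_2/c_1<q/(q+1)$. In general, the $e$-rays lying in a wall enter its relation with coefficients $\le 0$, so nothing ``cancels''.

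After relabelling you must prove the opposite sign statement for the $v^-$-rays. Moreover, the wall relation is not the local three-term identity $(q+1)v_\ell^++v_\ell^-=e_{\ell+1}+q\,e_{\ell-1}$ unless both neighbours are in the $+$ state. If $Q$ is the set of $-$ positions in the wall, the coefficients of $v_k^-$ for $k\in Q\cup\{\ell\}$ form a positive multiple of the $\ell$th column of $(M_{Q\cup\{\ell\}})^{-1}$.

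The missing input is that these inverses are entrywise nonnegative. Each principal submatrix is a nonsingular M-matrix (nonpositive off-diagonal entries and positive principal minors). This is exactly Lemma~\ref{lem:qCartanSubmatNonsing}(iv), as used in Proposition~\ref{prop:positive-inter}. With it, $\sum c_iD_i^-$ with all $c_i>0$ has positive degree on every invariant curve, and the toric Kleiman criterion (equivalently, strict convexity of its support function across every wall) finishes the ample-cone claim.
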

This result is inspired by the work of Abe and Zeng~\cite{abe2023petersonvarietiestoricorbifolds}, who established the case when $q=1$. Granting the theorem, we immediately deduce:
\begin{corollary}\label{thm:kaehlerPackage}
    \begin{enumerate}
        \item (K{\"a}hler package) The $q$-Klyachko algebra $\Kly_{n,q}$ is a Poincar\'e duality algebra. Moreover, it satisfies the Hard Lefschetz theorem and Hodge-Riemann relations with respect to any class $\ell \in \mathcal{K}_{n,q}$.
        \item (Volume polynomial realisation) Keeping the notation and assumptions of Section~\ref{sec:motivation-probability}. We have \[
        (n)_q!\int_{X_{\Sigma_{n,q}}} \left(x_1 u_1 + \cdots + x_n u_n\right)^n = \sum_{\eta} p([n];\eta) \prod_{i=1}^n x_i^{\eta(i)},
    \] where on the left-hand side, the $u_i$ are construed as $\QQ$-divisors on $X_{\Sigma_{n,q}}$, and the right-hand sum runs over all point measures supported on $[n]$ with $\sum_{i=1}^n \eta(i) = n$.
    \end{enumerate}
\end{corollary}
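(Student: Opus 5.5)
Both parts follow from Theorem~\ref{thm:toric-isom} once we invoke standard facts about projective simplicial toric varieties. Write $X = X_{\Sigma_{n,q}}$. Since $\Sigma_{n,q}$ is projective and simplicial, $X$ is a projective variety with at worst finite quotient singularities, i.e.\ a projective toric orbifold. For such a variety the rational cohomology $\HH^\bullet(X,\QQ)$ behaves like that of a smooth projective variety: it vanishes in odd degrees, it is generated by divisor classes (Danilov--Jurkiewicz), and it satisfies Poincar\'e duality. It also satisfies the Hard Lefschetz theorem and the Hodge--Riemann bilinear relations with respect to any ample class. This holds because $X$ is a rationally smooth projective variety (intersection cohomology equals ordinary cohomology), so the Kähler package of the decomposition theorem, or of Saito/orbifold Hodge theory, applies. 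Alternatively, one can cite McMullen's proof of the Kähler package for simple polytopes. Since all cohomology is of type $(p,p)$, the Hodge--Riemann relations reduce to sign conditions on primitive classes in $\HH^{2k}$, which is exactly the algebraic form used for $\Kly_{n,q}$.

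\textbf{Part (i).} Transport these statements across the graded isomorphism $\HH^{2\bullet}(X,\QQ)\cong \Kly_{n,q}$ of Theorem~\ref{thm:toric-isom}; grading must be halved, since $u_i$ sits in $\HH^2$. Two compatibilities need checking.

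\begin{enumerate}
\item The degree map $\deg_{n,q}$ must correspond to a nonzero multiple of $\int_X$. The top-degree piece $\Kly^n_{n,q}$ is one-dimensional, spanned by $u_1\cdots u_n$, and $\HH^{2n}(X)\cong\QQ$, so the isomorphism identifies $\deg_{n,q}$ with $c\int_X$ for some $c\neq 0$. For Poincaré duality, Hard Lefschetz, and Hodge--Riemann a positive rescaling of the pairing is harmless.
\item We need $c>0$, which is the main point to verify. Theorem~\ref{thm:toric-isom} says every $\ell\in\mathcal K_{n,q}$ is ample, so $\int_X \ell^n>0$. On the other hand, $\deg_{n,q}\ell^n$ is a positive combination of remixed Eulerian numbers. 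These are nonnegative: they equal $(n)_q!$ times probabilities by \S\ref{sec:motivation-probability}, and for $\eta=\mathbf 1$ the term equals $(n)_q!>0$. Hence $\deg_{n,q}\ell^n>0$, and therefore $c>0$.
\end{enumerate}

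With the pairing matched, an ample class on $X$ corresponds to a class in the ample cone, which contains $\mathcal K_{n,q}$. So Hard Lefschetz and Hodge--Riemann hold for every $\ell\in\mathcal K_{n,q}$.

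\textbf{Part (ii).} Normalise the identification so that $\int_X = \deg_{n,q}/(n)_q!$; this fixes the scalar $c$ from part (i). Expand by the multinomial theorem:
\[
(n)_q!\int_X\Bigl(\sum_{i} x_iu_i\Bigr)^n=\sum_{\eta}\binom{n}{\eta}\prod_i x_i^{\eta(i)}\,\deg_{n,q}\prod_i u_i^{\eta(i)}.
\]
By the Nadeau--Tewari formula quoted in \S\ref{sec:motivation-probability}, the factor $\deg_{n,q}\prod_i u_i^{\eta(i)}$ equals $(n)_q!\,p([n];\eta)$, and this gives the stated identity. The multinomial factor and the powers of $(n)_q!$ must be reconciled against the paper's normalisation of $\int_X$ (the statement's $(n)_q!$ prefactor suggests $\int_X=\deg_{n,q}$ up to the convention in which the $x^\eta$ coefficient carries $\binom{n}{\eta}$). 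This bookkeeping, together with the positivity of $c$ in part (i), is where care is required. The rest is formal.
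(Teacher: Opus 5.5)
Your part (i) is the paper's argument: Stanley--McMullen (or, as you say, the K{\"a}hler package for the rationally smooth projective variety $X_{\Sigma_{n,q}}$), transported along Theorem~\ref{thm:toric-isom}. You also check that $\deg_{n,q}$ is a \emph{positive} multiple of $\int_X$. The paper leaves this implicit, and it is genuinely needed for the Hodge index part. Your argument via nonnegativity of remixed Eulerian numbers works. The same fact also follows at once from the computation you skip in part (ii): $\deg_{n,q}(u_1\cdots u_n)=(n)_q!>0$ and $\int_X u_1\cdots u_n>0$.

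Part (ii) has a genuine gap.

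\textbf{The normalisation is not a free choice.} In (ii) the $u_i$ are the specific $\QQ$-divisor classes from the proof of Theorem~\ref{thm:toric-isom}, with $[D_{e_i}]=\sum_j A(n,q)_{ij}u_j$. So $\int_X u_1\cdots u_n$ is fixed by the fan, and ``normalise the identification so that $\int_X=\deg_{n,q}/(n)_q!$'' is an unproved assumption, not a step. The missing ingredient is the one the paper uses: \cite[Lemma~12.5.2]{CLS}, applied to the maximal cone $\sigma_{\emptyset,[n]}=-\cone\{\alpha_1,\dots,\alpha_n\}$, gives $\int_X u_1\cdots u_n=|\det A(n,q)|^{-1}$.

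\textbf{Your display does not give the statement.} Your computation ends at $\sum_\eta\binom{n}{\eta}(n)_q!\,p([n];\eta)\prod_i x_i^{\eta(i)}$, which is not the stated right-hand side. So ``this gives the stated identity'' is false, and the bookkeeping you defer is the whole content of (ii).

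\textbf{Carrying it out.} Lemma~\ref{lem:qCartanSubmatNonsing}(i) gives $|\det A(n,q)|=(n+1)_q$. The paper's own proof writes $(n)_q!$ here, which is a slip. Hence $\int_X=\deg_{n,q}/(n+1)_q!$, and what actually holds is
\[
(n+1)_q\int_{X_{\Sigma_{n,q}}}\Bigl(\sum_{i=1}^n x_iu_i\Bigr)^n=\sum_\eta\binom{n}{\eta}\,p([n];\eta)\prod_{i=1}^n x_i^{\eta(i)}.
\]
For instance, take $n=1$. Then $X_{\Sigma_{1,q}}=\P^1$ and $[D_{e_1}]=(q+1)u_1$ is the class of a point, so $\int_X u_1=1/(q+1)$. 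The identity as displayed in the statement would instead require $\int_X u_1=1$.

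So your suspicion about the normalisation was justified. But a proof of (ii) has to compute the top intersection number and then state and prove the corrected identity. As it stands, (ii) is not established.
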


The K{\"a}hler package in Item (i) is a 
consequence of Hodge theory for simplicial polytopes in the sense of Stanley and McMullen~\cite{StanleySimple,McMullenSimple}. Item~(ii), in addition to the Khovanskii--Teissier inequality, implies the probability $p([n];\eta)$ satisfies log-concavity properties of the form \[
p([n];\eta)^2 \ge p([n];\eta-\delta_i+\delta_{i-1}) \cdot p([n];\eta-\delta_i+\delta_{i+1}),\quad \text{for all $i$ with $\eta(i)\ge 2$}.
\]
More broadly speaking, volume polynomials are special cases of Lorentzian polynomials in the sense of Br{\"a}nd{\'e}n and Huh~\cite[Definition~2.1, Theorem~4.6]{BrandenHuh2020}.

\medskip

Finally, we say a few words about the dichotomy between the geometric situations when $q=1$ and when
$q$ is a prime power. The arrangement $\PG(n,q)$ can be seen as the $q$-analogue of the arrangement of coordinate hyperplanes
in $\P^n$. This comes from viewing $\PG(n,q)$ as the vanishing locus of the \emph{Moore
determinant}: \[
    \Delta_{n,q}(x_1,\dots,x_n) = \det
    \begin{pmatrix}
        x_0 & \cdots & x_n\\
        x_0^q & \cdots & x_n^q\\
        \vdots & \ddots & \vdots\\
        x_0^{q^{n}} & \cdots & x_n^{q^n}
    \end{pmatrix}.
\]
Note that replacing the powers $x^{q^i}_j$ by $x_i^{j}$ yields the usual Vandermonde determinant,
which cuts out the coordinate arrangement. Moreover, the wonderful compactification of the coordinate arrangement is the $n$-dimensional permutohedral variety $X_{A_n}$\footnote{Here, we
mean the wonderful model associated to the maximal building set in the sense of~\cite{dcp}.}. From this
perspective, one can view $\DL_n$ as the $q$-analogue of the permutohedral variety $X_{A_n}$, and
Theorem~\ref{th:algebra-isoms} as the $q$-analogue of the well-known isomorphism, usually attributed
to Klyachko~\cite{Klyachko1985}, between $\Kly_{1,n}$ and the image of the restriction map $\CH(\Fl_{n+1})\to \CH(X_{A_n})$.
We summarise the dichotomy in the following table.

\begin{table}[h]
    \centering
    \begin{tabular}{||m{0.45\textwidth}|m{0.45\textwidth}||}
        \hline
        $q=1$ & $q$ prime power \\
        \hline\hline
        Permutohedral toric variety $X_{A_n}$ & Deligne--Lusztig variety $\DL_n$ \\
        \hline
        {\raggedright Coordinate arrangement\par} & {\raggedright The arrangement $\PG(n,q)$ of all $\FF_q$-hyperplanes\par} \\
        \hline
        Klyachko algebra $\Kly_{n,1}$ & $q$-Klyachko algebra $\Kly_{n,q}$ \\
        \hline
        {\raggedright $\Kly_{n,1} \simeq \im(\CH(\Fl_{n+1})\to\CH(X_{A_n}))$ (Klyachko~\cite{Klyachko1985})\par} & {\raggedright $\Kly_{n,q} \simeq \im(\CH(\Fl_{n+1})\to\CH(\DL_n))$ (Theorem~\ref{th:algebra-isoms}) \par}\\
        \hline
        {\raggedright The fan $Y(A_n)=\Sigma_{1,n}$ in~\cite{abe2023petersonvarietiestoricorbifolds,Blume2015,HMSS2024}\par} & The fan $\Sigma_{n,q}$ constructed in~\S\ref{sec:toric-orbifold}. \\
        \hline
        $\Kly_{n,1} \simeq \HH^\bullet(X_{\Sigma_{1,n}},\QQ)$ (Abe--Zeng~\cite{abe2023petersonvarietiestoricorbifolds}) & $\Kly_{n,q} \simeq \HH^\bullet(X_{\Sigma_{n,q}},\QQ)$ (Theorem~\ref{thm:toric-isom}) \\
        \hline
        Peterson variety $Y_n$ & ??? \\
        \hline
    \end{tabular}
\end{table}

For the last row, we note that over the complex numbers, the Peterson variety $Y_n$ has the property that its
cohomology ring is the Klyachko algebra,
$\Kly_{n,1}\simeq \HH^\bullet(Y_n,\QQ)$. In~\cite{abe2023petersonvarietiestoricorbifolds}, Abe and Zeng established a map $Y_n\to X_{\Sigma_{1,n}}$ that realises this isomorphism in cohomology. It is natural to ask if there is a similar subvariety of
$\Fl_{n+1}$ whose cohomology ring equals $\Kly_{n,q}$. The variety $Y_n$ is a regular nilpotent Hessenberg variety with Hessenberg function $h=(2,\,3,\,\dots,\,n,\,n)$. The regular semisimple Hessenberg variety associated to the same Hessenberg function $h$ is exactly the permutohedral variety $X_{A_n}$, whose role is taken by $\DL_n$ in our $q$-analogue situation.

\medskip

Concerning organisation, we start in Section~\ref{sec:DL} explaining the connection among the $q$-Klyachko algebra, the Deligne--Lusztig variety, and the projective geometry matroid, proving Theorem~\ref{th:algebra-isoms}. In \S\ref{sec:toric-orbifold}, we give the construction of the fan $\Sigma_{n,q}$ and prove Theorem~\ref{thm:toric-isom}.

\medskip

\noindent
{\bf Acknowledgements.}
Many thanks to Hunter Spink and Vasu Tewari for comments. Many thanks to Hiraku Abe for explaining his work with Zeng~\cite{abe2023petersonvarietiestoricorbifolds}. The author was partially supported by a University of Toronto Excellence Award.

\section{The Deligne--Lusztig variety of a Coxeter element}\label{sec:DL}
Let us first introduce some notation. First, we define the $q$-integers and $q$-factorials
\[
    (m)_q \coloneqq \frac{1-q^m}{1-q} = 1+\cdots + q^{m-1},\quad (m)_q! \coloneqq \prod_{1\le i\le m} (i)_q.
\]

Put $k = \FF_q$ for $q$ a prime power.
Let $G=\GL_{n+1}$ be the general linear group over $k$ and $B \subseteq G_{\bar k}$ a Borel
subgroup. We define $\Fl_{n+1}:= G_{\bar k} / B$ as the (complete) flag variety over $\bar k$. Let
$\pi_d\colon \Fl_{n+1}\to \Gr(d,n+1)$ be the map that forgets all but the $d$th subspace.
\begin{definition}
    Let $w \in \mfS_{n+1}$. The \emph{Deligne--Lusztig variety}
    $\DL^\circ_n(w)$ associated to $w$ is defined as the fibre product
    \[
        \Gamma_{\frob} \times_{\Fl_{n+1} \times \Fl_{n+1}} [G.(B,wB)]
    \]
    where $\frob\colon \Fl_{n+1} \to \Fl_{n+1}$ is the absolute Frobenius, $G$ acts diagonally on $\Fl_{n+1}\times \Fl_{n+1}$, and $G.(B,wB)$ is the $G$-orbit of $(B,wB)$.
    Now, consider the permutation $c = (1,2)(2,3)\cdots(n,n+1) \in \mfS_{n+1}$. The \emph{Coxeter
    variety} is defined to be $\DL_n^\circ
    \coloneqq \DL^\circ_n(c)$. The \emph{compactified Coxeter variety} $\DL_n$ is closure of
    $\DL_n^\circ$ in $\Fl_{n+1}$.
\end{definition}

The space $\DL_n$ is a smooth projective variety of dimension $n$ with simple normal crossing boundary $\DL_n \setminus \DL_n^\circ$. Analysing the fibre product above, one can describe $\DL_n^\circ$ very explicitly:
\begin{observation}
    Restricting the map $\pi_1\colon \Fl_{n+1} \to \P^n$ yields an isomorphism:
    \begin{align*}
        \pi_1|_{\DL_n^\circ}\colon \DL_n^\circ & \isoto \{P \in \P^n \mid P,\dots,\frob^{n} P\text{ are in general position} \} \\
        & = \P^n \setminus \bigcup\{H: H \text{ is a $k$-rational hyperplane}\},\quad (V_i)\mapsto V_1.
    \end{align*}
    The target is the complement of the hyperplane arrangement $\PG(n,q)$, the arrangement of all $k$-rational hyperplanes in $\P^n$.
\end{observation}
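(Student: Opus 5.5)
I would prove the Observation directly from the definition of the fibre product, unwinding what it means for a flag to lie in relative position $c$ with its Frobenius twist. A point of $\DL_n^\circ$ is a complete flag $V_\bullet = (V_1\subset V_2\subset\cdots\subset V_n)$ in $\bar k^{n+1}$ such that $(V_\bullet,\frob V_\bullet)$ lies in the $G$-orbit of $(B,cB)$. For the Coxeter element $c=(1,2)(2,3)\cdots(n,n+1)$, this relative-position condition translates, by the standard Bruhat-cell description for $\GL_{n+1}$, into the following. For each $1\le i\le n$ we have $V_i + \frob V_i = V_{i+1}$, with $V_i\neq \frob V_i$; equivalently, $\dim(V_i\cap \frob V_i)=i-1$ and $V_i+\frob V_i = V_{i+1}$. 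Here $V_{n+1}=\bar k^{n+1}$, and the last condition reads $V_n\ne \frob V_n$. I would verify this by computing the relative position of the standard flag $E_\bullet$ and $cE_\bullet$: one finds $cE_i = \langle e_2,\dots,e_{i+1}\rangle$ when $c$ is written as the $(n+1)$-cycle, so $E_i + cE_i = E_{i+1}$, and the dimension data determine the orbit.

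Given this, induction on $i$ yields
\[
V_i = \langle P, \frob P,\dots,\frob^{i-1}P\rangle
\]
with $P = V_1$, using $\frob V_i = \langle \frob P,\dots,\frob^{i}P\rangle$. So the flag is determined by $V_1$, which gives injectivity of $\pi_1$. The condition $\dim V_{n+1}=n+1$ then says exactly that $P,\dots,\frob^nP$ are linearly independent, i.e.\ in general position. Conversely, for any such $P$ the flag defined by this formula satisfies all the relative-position conditions. Hence $\pi_1$ is a bijection onto the stated locus.

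To upgrade the bijection to an isomorphism, I would note that the inverse $P\mapsto(\langle P,\dots,\frob^{i-1}P\rangle)_i$ is a morphism on the open set where the Moore-type wedge $P\wedge\frob P\wedge\cdots\wedge \frob^nP$ is nonzero, since the Plücker coordinates of $V_i$ are polynomial in $P$ there. Both sides are reduced: $\DL_n^\circ$ is smooth, being the transversal intersection of the graph of Frobenius with a $G$-orbit, as is standard. Hence the two mutually inverse morphisms give an isomorphism.

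Finally, for the equality of sets I would show that $P,\dots,\frob^nP$ are dependent iff $P$ lies on a $k$-rational hyperplane. If $a\cdot x=0$ with $a\in k^{n+1}$, then applying Frobenius shows that all $\frob^jP$ lie in the same hyperplane. Conversely, if they are dependent, let $W=\langle P,\dots,\frob^nP\rangle$, which is proper. Then $\frob W\subseteq W$ because $\frob^{n+1}P$ lies in the span: the smallest $m$ with $\frob^mP\in\langle P,\dots,\frob^{m-1}P\rangle$ satisfies $m\le n$, and applying $\frob$ propagates this. Since $\frob W = W$ as the dimensions agree, $W$ is defined over $k$ by Galois descent (Lang's theorem for subspaces), so $P$ lies in a $k$-rational hyperplane containing $W$. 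I expect the main obstacle to be the first step, the precise translation of relative position $c$ into the conditions $V_i+\frob V_i=V_{i+1}$; the rest is formal.
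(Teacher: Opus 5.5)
Your proposal is correct and is the ``analysis of the fibre product'' that the paper invokes without writing out. You show that relative position $c$ forces $V_{i+1}=V_i+\frob V_i$, hence $V_i=\langle P,\frob P,\dots,\frob^{i-1}P\rangle$; the inverse is a morphism where the Moore determinant is nonzero, reducedness of $\DL_n^\circ$ upgrades the bijection to an isomorphism, and $\frob$-stable subspaces being $k$-rational identifies the image.

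One point worth flagging is that your convention $cE_i=\langle e_2,\dots,e_{i+1}\rangle$ is genuinely what the statement requires. With the opposite convention (effectively $c^{-1}$) one gets $V_{i-1}=V_i\cap\frob V_i$, so the flag is determined by $V_n$ rather than $V_1$. For $n\ge 2$, $\pi_1$ is then only a bijective morphism whose inverse involves $\frob^{-1}$, not an isomorphism.
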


Let $\DL'_n$ be the wonderful compactification of the arrangement complement $\DL_n^\circ$ in the sense of de Concini and Procesi~\cite{dcp}. This is obtained as a series of blow-ups on $\P^n$: first blow up all $k$-points, then the strict transforms of all $k$-lines, and then $k$-planes, and so on. Let $\pi'\colon \DL'_n \to \P^n$ be the composition of these blow-ups.

Langer observed the following; see~\cite[Proposition~7.1]{Langer},
\begin{proposition}[Langer]\label{prop:langer}
    We have the following equality, \[
    \xymatrix{
        \DL \ar[r]^{\pi_1} \ar[d]^{=} & \P^n\ar[d]^{=} \\
        \DL' \ar[r]^{\pi'} & \P^n 
    }\]
    In particular, $\DL_n = \DL_n'$ on the nose.
\end{proposition}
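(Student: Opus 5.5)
The aim is to show that the closure $\DL_n$ of $\DL_n^\circ$ in $\Fl_{n+1}$ coincides with the wonderful model $\DL_n'$, compatibly with the maps to $\P^n$. The approach is to build an explicit morphism $\DL_n'\to\Fl_{n+1}$ extending the map on the open part, check that it is a closed embedding, and then identify its image with $\DL_n$.

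First, describe $\DL_n^\circ\to\Fl_{n+1}$ explicitly. For the Coxeter element $c$, the relative position condition with $\frob$ forces the flag to be $V_i=\langle P,\frob P,\dots,\frob^{i-1}P\rangle$. Inverting $\pi_1$ on $\DL_n^\circ$ therefore means
\[
P\mapsto (V_1\subset V_2\subset\cdots\subset V_n),\qquad V_i=\operatorname{span}(P,\dots,\frob^{i-1}P).
\]
I will treat each component $P\mapsto V_i\in\Gr(i,n+1)$ as a rational map $\P^n\dashrightarrow\Gr(i,n+1)$. In Plücker coordinates it is given by the $i\times i$ minors of the matrix with rows $x,x^q,\dots,x^{q^{i-1}}$, which are partial Moore determinants.

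Second, show that each component extends to a morphism on $\DL_n'$. The indeterminacy locus of $P\mapsto V_i$ is the union of the $k$-rational linear subspaces of dimension $\le i-2$, namely those where $P,\dots,\frob^{i-1}P$ fail to span. The plan is to compute the order of vanishing of the ideal of $i\times i$ Moore minors along each $k$-subspace $W$, and to show that after the iterated blow-up this ideal becomes invertible, in fact a product of powers of the exceptional divisors. Near a point of a $k$-subspace $W$ of dimension $d$, one can choose $k$-rational coordinates so that $W=\{x_{d+1}=\dots=x_n=0\}$. Frobenius then acts coordinatewise, and the limiting span is governed by the normal direction: the span of $P$'s Frobenius twists degenerates to $W$ (when $i>d+1$) plus the span of the Frobenius twists of the normal vector. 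Inductively over the strata of the wonderful model this gives a well-defined limit, so the combined map $\DL_n'\to\prod_i\Gr(i,n+1)$ is a morphism. Its image lies in $\Fl_{n+1}$ because the incidence conditions are closed.

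Third, prove that the resulting morphism $\phi\colon\DL_n'\to\Fl_{n+1}$ is a closed embedding. Since $\phi$ is proper and an isomorphism onto its image over the dense open $\DL_n^\circ$, its image is exactly $\DL_n$, and $\pi_1\circ\phi=\pi'$ holds by construction. It remains to check that $\phi$ is injective with injective differential along the boundary. On the boundary stratum indexed by a flag of $k$-subspaces $W_1\subset\cdots\subset W_r$, the point of $\DL_n'$ records $P$ together with normal directions in each successive quotient, and the flag $\phi$ produces recovers these data: the $W_j$ appear among the $V_i$, and the next $V_i$ records the normal direction. Injectivity follows from this. For the differential, I will work in the standard affine charts of the blow-up, where everything is polynomial in Frobenius powers. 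Since $d(x^q)=0$, the differential of $V_i$ in the transverse coordinates is linear and read off directly.

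The main obstacle is the second step together with the immersion check at deep boundary strata. This requires a uniform local computation of Moore minors in blow-up charts, where the nested structure of the building set must match the nested degenerations of the Frobenius spans. I would first carry out the computation for $n=2$, where one blows up the $q^2+q+1$ rational points of $\P^2$, and then set up induction on $n$. The induction uses that the exceptional divisor over a $k$-subspace $W$ is a product $\DL'(W)\times\DL'(\P^n/W)$, and that $\phi$ restricted there is the product of the corresponding smaller Coxeter maps, with flags concatenated. Alternatively one could cite Langer directly, but the plan above is the self-contained route.
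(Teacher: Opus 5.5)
The paper gives no argument of its own here; it simply quotes Langer's Proposition~7.1. Your self-contained route is therefore necessarily different: you extend $P\mapsto(V_i)_i$ to $\phi\colon\DL'_n\to\Fl_{n+1}$, identify the image with $\DL_n$ by properness, and then show $\phi$ is a closed immersion. The frame of that route is sound. The base-locus description of the Moore-minor maps, the conclusion $\phi(\DL'_n)=\DL_n$, the identity $\pi_1\circ\phi=\pi'$, and the injectivity argument via the nested flag $W_1\subset\cdots\subset W_r$ plus normal directions are all fine.

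The gap is that the step carrying all the content is only asserted: that $\phi$ is \emph{regular} at points of $E_{W_1}\cap\cdots\cap E_{W_r}$ with $r\ge 2$. The reasons you give do not reach it.
\begin{itemize}
\item Your limit along $P_0+tv$, with $P_0$ generic in a single $W$, only computes $\phi$ at generic points of $E_W$. That is the easy part, since a rational map from the smooth variety $\DL'_n$ to the projective $\Fl_{n+1}$ is automatically defined in codimension one.
\item The induction through $E_W\cong\DL'(W)\times\DL'(\bar k^{n+1}/W)$ tells you what $\phi|_{E_W}$ must be. But it presupposes that $\phi$ is already regular near $E_W$, so it cannot exclude indeterminacy at intersections of exceptional divisors.
\item Well-defined limits along arcs would still have to be upgraded to a morphism, for example via finiteness of the graph closure over the normal $\DL'_n$. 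That requires all arcs, not only linear arcs from generic points of one centre.
\end{itemize}

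What is missing is a computation in de Concini--Procesi charts adapted to a nested flag. Take a $k$-basis adapted to $W_1\subset\cdots\subset W_r$, put $w_j=\dim W_j$, and write $P=p_0+t_1p_1+t_1t_2p_2+\cdots+t_1\cdots t_rp_r$. Here $p_j$ lies in the $j$th block and $t_j$ is a local equation of $E_{W_j}$.
\begin{itemize}
\item Since $p_0$ stays generic in $W_1$, you have $\frob^{w_1}p_0=\sum_{m<w_1}a_m\frob^m p_0$ with $a_0$ a unit.
\item Then $\frob^{w_1}P-\sum_m a_m\frob^mP=-a_0t_1(p_1'+t_2p_2'+\cdots)$, with $p_j'\equiv p_j$ modulo $t_1^{q-1}$.
\item Iterating this block by block produces a regular frame for every $V_i$. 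Equivalently, the ideal of $i\times i$ Moore minors pulls back to $\prod_{j\colon w_j<i}t_j^{(i-w_j)_q}$ times a unit. Only then is $\phi$ a morphism.
\end{itemize}

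Your immersion step is also too quick. The normalised frames involve $a_0^{-1}$ and powers such as $t_1^{q-1}$, so their differentials are not just the linear parts; for $q=2$ these terms contribute at first order. At a point of $E_{W_1}\cap E_{W_2}$, the direction $\partial_{t_2}$ is killed by $d\pi'$, because $\partial_{t_2}P=t_1p_2$. It is detected only by $V_{w_1+1}$, through the explicit frame.

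You can bypass the immersion check entirely. $\DL_n$ is smooth, as recalled in the paper. So once $\phi\colon\DL'_n\to\DL_n$ is a proper birational morphism that is injective on points, it is finite and birational onto a normal variety, hence an isomorphism by Zariski's Main Theorem.
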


\begin{corollary}
    The rational Chow ring of $\DL_n$ is given by the Chow ring of the matroid $\PG(n,q)$:
    \begin{align*}
        \CH(\DL_n)_\QQ & \simeq \CH(\PG(n,q))_\QQ \\
        & \coloneqq \frac{\QQ[x_F: F\subset \P^n\,\text{a $k$-rational proper subspace}]}{I_{n,q}+J_{n,q}},
    \end{align*}
    where $I_{n,q}$ is generated by linear relations \[
        \sum_{F\subset H} x_F - \sum_{F \subset H'} x_F, \quad\text{for any pair of $k$-rational hyperplanes $H,H' \subset \P^n$,}
    \]
    and $J_{n,q}$ is generated by monomials \[
        x_F x_{F'}\quad \text{for every pair $F,F'$ of subspaces not contained in one another.}
    \]
\end{corollary}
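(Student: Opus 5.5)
The plan is to combine Proposition~\ref{prop:langer} with the description of the Chow ring of a wonderful compactification due to de~Concini--Procesi and Feichtner--Yuzvinsky. Proposition~\ref{prop:langer} identifies $\DL_n$, as a variety over $\P^n$, with the wonderful model $\DL_n'$ of the arrangement $\PG(n,q)$ for the maximal building set. This building set consists of all $k$-rational proper linear subspaces of $\P^n$, equivalently all nonempty proper flats of the matroid $\PG(n,q)$. So it suffices to compute $\CH(\DL_n')_\QQ$. That variety is obtained from $\P^n$ by the iterated blow-up of (strict transforms of) the $k$-points, then $k$-lines, and so on up to codimension-two $k$-subspaces. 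By construction each centre is smooth and meets the earlier exceptional divisors transversally.

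First, I would fix the indexing of generators. For each proper $k$-subspace $F\subsetneq \P^n$ with $\dim F\le n-2$, let $x_F$ be the class of the exceptional divisor $E_F$ over the strict transform of $F$. For each $k$-rational hyperplane $H$, let $x_H$ be the class of the strict transform of $H$. The boundary $\DL_n\setminus\DL_n^\circ$ is the simple normal crossing union of all these $E_F$. Next, I would invoke the presentation of Feichtner--Yuzvinsky~\cite{FeichtnerYuzvinsky}, or equivalently Keel's formula for blow-ups applied iteratively, or de~Concini--Procesi~\cite{dcp}. For the wonderful model of a realizable arrangement, this gives $\CH(\DL_n')_\QQ \simeq \QQ[x_F]/(I+J)$. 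Here $J$ is generated by products $\prod x_{F_i}$ over families that are not nested. For the maximal building set, nested sets are exactly chains, so $J$ is generated by the quadratic monomials $x_Fx_{F'}$ with $F,F'$ incomparable; this is $J_{n,q}$.

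The linear relations then come from pulling back the hyperplane class $h=c_1(\sco_{\P^n}(1))$. For a $k$-hyperplane $H$, the total transform of $H$ under $\pi'$ is $\sum_{F\subseteq H} E_F$. This holds because each blow-up centre contained in $H$ has multiplicity $1$ in $H$ (linear subspaces are smooth), and centres not contained in $H$ contribute nothing. Hence $\pi'^*h=\sum_{F\subseteq H}x_F$ for every $H$. Taking differences over pairs $H,H'$ gives exactly the generators of $I_{n,q}$. Conversely, Feichtner--Yuzvinsky's linear relations in the atom-free form used here are generated by these differences: their relation for atoms $i,j$ of the matroid is $\sum_{F\ni i}x_F=\sum_{F\ni j}x_F$. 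Under projective duality, atoms are points of $\P^{n\vee}(k)$, that is $k$-hyperplanes, and "flat containing the atom" corresponds to "subspace contained in $H$". I will formulate the statement dually so that flats are identified with subspaces $F\subset\P^n$ via intersection of hyperplanes. This reverses inclusion, and I will check that the relation really reads $\sum_{F\subseteq H}$.

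The main obstacle is only bookkeeping. First, the convention for $\PG(n,q)$ (flats versus subspaces, and the reversal of inclusion) must be matched so that the FY presentation literally becomes $I_{n,q}+J_{n,q}$. Second, one must confirm that FY's theorem, stated for complex arrangements, holds for Chow rings over $\bar k$. For this I would cite the purely algebraic proof via iterated Keel blow-up formulas, which works over any field, together with the fact that $\CH(\P^n)$ is generated by $h$. Finally, the isomorphism $\CH(\DL_n)_\QQ\simeq\CH(\DL_n')_\QQ$ is immediate from the equality $\DL_n=\DL_n'$ in Proposition~\ref{prop:langer}.
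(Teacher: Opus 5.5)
Your proposal is correct and follows the same route as the paper: the paper's proof simply combines Langer's identification $\DL_n=\DL_n'$ (Proposition~\ref{prop:langer}) with the Feichtner--Yuzvinsky presentation \cite[Corollary~2]{FeichtnerYuzvinsky} for the maximal building set. Your extra bookkeeping fills in details the paper leaves implicit, namely nested sets being chains, the inclusion reversal between flats and subspaces, the total-transform identity $\pi'^*h=\sum_{F\subseteq H}x_F$, and the remark that the presentation holds for Chow rings over $\bar k$ via iterated Keel blow-up formulas.
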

\begin{proof}
    Thanks to Proposition~\ref{prop:langer}, we can compute the Chow ring of $\DL_n$ by viewing it as a wonderful model of hyperplane arrangements. Then the assertion follows immediately from the presentation of the Chow ring of wonderful models established in~\cite[Corollary~2]{FeichtnerYuzvinsky}.
\end{proof}
Here, the class $x_F$ comes from the exceptional divisor of blowing up (the strict transform of) a $k$-linear subspace $F \subset \P^n$. Picking a $k$-rational hyperplane $H\subseteq \P^n$, we define $\alpha\coloneqq \sum_{F\subset H}x_F$---the relations from the ideal $I_{n,q}$ guarantee the linear equivalence class of $\alpha$ is independent of the choice of the hyperplane $H$. Geometrically, this class of $\alpha$ is the first Chern class $c_1(\pi_1^*\sco_{\P^n}(1))$.

Recalling that $N= (n+1)_q-1$, we define divisor classes $\gamma_{k}$ on $X_{A_N}$ as the pullback of the class of the $N$-dimensional hypersimplex $\Delta(N+1,(k)_q)$ in the sense of \cite{BergetSpinkTseng} and \cite{KatzKutler}; geometrically, this comes from pulling back the Pl\"ucker line bundle on the Grassmannian $\Gr((k)_q,N+1)$ via the composite \[
    X_{A_N}\hookrightarrow \Fl_{N+1} \lra \Gr((k)_q,N+1).
\]
We will also denote by $\gamma_k\in \CH^1(\DL_n)$ the restriction of $\gamma_k$ to $\DL_n$.
Also, since $\DL_n$ is a subvariety of $\Fl_{n+1}$, we can define $L_k\in \CH^1 (\DL_n)$ as the restriction of the first Chern class of the $k$\/th Pl\"ucker line bundle on $\Fl_{n+1}$.
In the case when $q=1$ and $\DL_n$ is replaced by the permutohedral variety $X_{A_n}$, the two types of divisors agree; $L_i=\gamma_i$ for all~$1\le i\le n$. The following lemma is a $q$-deformed version of this coincidence.
\begin{lemma}\label{lem:gamma-L-diff}
    For all~$1\le i\le n$, we have $\gamma_i = q^i L_i$ as divisor classes on $\DL_n$.
\end{lemma}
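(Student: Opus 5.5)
The plan is to write both $\gamma_i$ and $L_i$ in the Feichtner--Yuzvinsky presentation of $\CH(\DL_n)_\QQ$ from the Corollary above, as a multiple of $\alpha$ minus a combination of the boundary classes $x_F$, and then compare coefficients. Since $\DL_n$ is the wonderful model (Proposition~\ref{prop:langer}), every divisor class is determined by its degree relative to $\alpha=c_1(\pi_1^*\sco(1))$ together with its orders of vanishing along the exceptional divisors $E_F$, one for each proper $k$-rational subspace $F$. I will therefore compute, for each $i$, a rational section of each line bundle, its total degree in the coordinates of $\P^n$, and its vanishing order along every $E_F$.

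\emph{Step 1: $\gamma_i$.} The embedding $\DL_n\hookrightarrow X_{A_N}$ comes from the linear map $\P^n\to\P^N$ given by all $(n+1)_q=N+1$ rational linear forms, one for each $k$-rational hyperplane. Under this map the boundary divisor $E_F$ lies over the toric boundary divisor of $X_{A_N}$ indexed by the subset $S_F=\{H: H\supseteq F\}$. If $\dim F=d$ (projective), then $|S_F|=(n-d)_q$.

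The class of the hypersimplex $\Delta(N+1,(i)_q)$ corresponds to the support function whose value on the ray $e_S$ is $\min(|S|,(i)_q)$, up to the usual sign and normalisation. Pulling this back gives
\[
\gamma_i=(i)_q\,\alpha-\sum_F c_F\,x_F,
\]
where $c_F$ is an explicit piecewise expression in $(n-d)_q$ and $(i)_q$. Equivalently, $\gamma_i$ is the pullback of the Plücker bundle on $\Gr((i)_q,N+1)$ through the coordinate flag. Concretely, its section is a product of $(i)_q$ linear forms, and the vanishing order along $E_F$ is $\min\bigl(\#\text{chosen forms vanishing on }F,\ \cdot\bigr)$. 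Minimising over the choice of forms gives $c_F$.

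\emph{Step 2: $L_i$.} On the open part $\DL_n^\circ$ the $i$-th subspace is $V_i=\langle P,\frob P,\dots,\frob^{i-1}P\rangle$. The Plücker coordinates of $V_i$ are therefore the $i\times i$ minors of the matrix with rows $x,x^q,\dots,x^{q^{i-1}}$. These are partial Moore determinants, homogeneous of degree $1+q+\dots+q^{i-1}=(i)_q$, so the $\alpha$-coefficient of $L_i$ is $(i)_q$. It remains to compute the minimal vanishing order of these minors along each $E_F$.

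Take a point of $F$ in rational coordinates where $F=\{x_{d+1}=\dots=x_n=0\}$, and blow up: $x_j=t y_j$ for $j>d$. By linearity of Frobenius over $\FF_q$, the rows $(x^{q^m})$ then split into a part of order $0$ and a part of order $q^m$. Triangularising the minor shows that the minimal vanishing order equals the total $t$-adic order of the "excess" rows. This is the exponent sum $\sum q^m$ over the last $\max(0,i-d-1)$ rows.

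\emph{Step 3: comparison.} I would then check that the coefficients from Steps 1 and 2 are related by $\gamma_i=q^iL_i$ modulo the linear relations $I_{n,q}$, in particular by reconciling the normalisation of $\alpha$. If the raw coefficients differ by a multiple of $\alpha=\sum_{F\subset H}x_F$, I will absorb the difference using that relation. This is a matching of $q$-integer identities of the form $(a)_q-(b)_q=q^b(a-b)_q$, which is exactly where the factor $q^i$ should appear.

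The main obstacle is the vanishing order in Step 2. One must show that the minimum over all $i\times i$ Moore minors of the $t$-adic order along $E_F$ is exactly the predicted $q$-power sum. This needs a careful choice of minor and a generic-point argument on $E_F$; the strict transforms from earlier blow-ups of smaller subspaces must be harmless there, which holds because a generic point of $E_F$ avoids them. I would first try to prove it by factoring the Moore matrix as (rational change of basis)$\times$(block-triangular matrix), using $\FF_q$-linearity of Frobenius.
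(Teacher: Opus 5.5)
Your overall strategy (expand both classes in the blow-up basis and compare coefficients using $(a)_q-(b)_q=q^b(a-b)_q$) is the paper's. The paper takes the two expansions from Langer (Lemma~6.2) and Katz--Kutler (Lemma~2.4) instead of recomputing them. As you have set it up, however, the computation cannot produce the statement.

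Since $\DL_n$ is an iterated blow-up of $\P^n$ along centres of codimension at least $2$, $\CH^1(\DL_n)$ is freely generated by $\alpha$ and the $x_F$ with $\dim F\le n-2$. So in an expression $a\alpha-\sum_{\dim F\le n-2}c_Fx_F$ the coefficient $a$ is the degree of the pushforward to $\P^n$, an invariant. The relations $I_{n,q}$ only eliminate the hyperplane classes $x_H$, so they cannot ``absorb'' a discrepancy in $a$.

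You give both $\gamma_i$ (a product of $(i)_q$ linear forms) and $L_i$ (the Pl\"ucker class of $V_i$) degree $(i)_q$. Then $\gamma_i=q^iL_i$ would force $(i)_q=q^i(i)_q$. Already for $n=1$, where $\DL_1=\P^1$ and nothing is blown up, your conventions give $\gamma_1=L_1=\alpha$, whereas the lemma and Katz--Kutler's formula give $\gamma_1=q\alpha$.

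In the conventions where the lemma holds, both sides differ from yours:
\begin{enumerate}
\item $\gamma_i$ has degree $(n+1)_q-(i)_q$. Its sections are products of $(n+1)_q-(i)_q$ distinct rational linear forms (the complementary choice of sign for the hypersimplex), so its order along $E_F$ is $\max\bigl(0,(\codim F)_q-(i)_q\bigr)$.
\item $L_i$, as in Langer's formula, has degree $(n+1-i)_q$. It is the Pl\"ucker class of the $(n+1-i)$-dimensional member $V_{n+1-i}=\langle P,\dots,\frob^{n-i}P\rangle$ of the flag, not of $V_i$. If you read $L_i$ as the class of $V_i$, the degrees $q^i(i)_q$ and $q^i(n+1-i)_q$ already disagree unless $2i=n+1$.
\end{enumerate}

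Independently, the vanishing order in Step~2 is wrong. The order of $L$ along $E_F$ is the minimum over all Pl\"ucker coordinates. In the Laplace expansion of a Moore minor along the two column blocks, the term of lowest $t$-order assigns the rescaled columns to the rows with the \emph{smallest} Frobenius exponents. Its coefficient is a product of two Moore-type determinants, which is nonzero at a generic point of $E_F$. So for $V_j$ and $\dim F=d<j-1$ the order is $1+q+\dots+q^{j-d-2}=(j-d-1)_q$, not $q^{d+1}(j-d-1)_q$. Your triangularisation fails because clearing the order-zero block of a late row with earlier rows brings in their $t$-entries, of orders $1,q,\dots$. For instance, on $\DL_2$ the minors $x_ax_b^q-x_bx_a^q$ of $V_2=\langle P,\frob P\rangle$ vanish to order $1$, not $q$, along the exceptional divisor over a rational point.

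With this correction and $j=n+1-i$ one gets $(j-d-1)_q=(\codim F-i)_q$. This recovers exactly Langer's expansion $L_i=(n+1-i)_q\alpha-\sum_{\codim F\ge i}(\codim F-i)_qx_F$. Together with $\gamma_i=((n+1)_q-(i)_q)\alpha-\sum_{\codim F\ge i}((\codim F)_q-(i)_q)x_F$, the identity $(a)_q-(b)_q=q^b(a-b)_q$ finishes the proof. So recomputing both expansions from explicit sections is a viable, self-contained alternative to citing Langer and Katz--Kutler, but only after you fix the normalisations on both sides and the Step~2 computation.
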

\begin{proof}
    By \cite[Lemma~6.2]{Langer}, we have \[
        L_i = (n+1-i)_q \alpha - \sum_{F\colon \codim F \ge i} (\codim F-i)_q x_F.
    \]
    On the other hand, by \cite[Lemma~2.4]{KatzKutler}, we have \[
        \gamma_i = ((n+1)_q-(i)_q)\alpha -\sum_{F\colon \codim F\ge i}((\codim F)_q - (i)_q) x_F.
    \]
    The claim follows immediately from the identity $\frac{(a)_q-(b)_q}{(a-b)_q} = q^b$.
\end{proof}
Finally, the matroid $\PG(n,q)$ is the special case of a more general class of matroids called \textit{perfect matroid designs}. In~\cite[Lemma~7.10]{KatzKutler}, the authors established quadratic relations among $\gamma_i$ for general perfect matroid designs. Applying their formula to $\PG(n,q)$, we obtain the following.
\begin{lemma}
    The divisor classes $\gamma_i$ satisfy the $q$-Klyachko relation \[
        (q+1)\gamma_i^2 = \gamma_i \gamma_{i+1} + q\gamma_i \gamma_{i-1},\quad\text{for all $1\le i\le n$.}
    \]
\end{lemma}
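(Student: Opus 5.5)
The plan is to work entirely inside the presentation $\CH(\DL_n)_\QQ\simeq\CH(\PG(n,q))_\QQ$ from the Corollary above, and to write each $\gamma_i$ in the form used in the proof of Lemma~\ref{lem:gamma-L-diff}:
\[
\gamma_i = ((n+1)_q-(i)_q)\alpha-\sum_{\codim F\ge i}((\codim F)_q-(i)_q)x_F .
\]
The convention is $\gamma_0=\gamma_{n+1}=0$; this is consistent with the formula, since $\gamma_{n+1}$ has zero coefficients ($\codim F\le n+1$ for the empty flat) and $\gamma_0$ comes from $\Gr(0,\cdot)$. One route is to cite \cite[Lemma~7.10]{KatzKutler} directly. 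For a perfect matroid design with flat sizes $f_0<f_1<\cdots$, it gives a quadratic relation of the shape
\[
(f_{i+1}-f_{i-1})\gamma_i^2=(f_i-f_{i-1})\gamma_i\gamma_{i+1}+(f_{i+1}-f_i)\gamma_i\gamma_{i-1}.
\]
We then specialise to $\PG(n,q)$, where the rank-$k$ flats have $f_k=(k)_q$ elements. Using $(a)_q-(b)_q=q^b(a-b)_q$, the coefficients become $f_{i+1}-f_{i-1}=q^{i-1}(q+1)$, $f_i-f_{i-1}=q^{i-1}$ and $f_{i+1}-f_i=q^i$. Dividing through by $q^{i-1}$ yields exactly $(q+1)\gamma_i^2=\gamma_i\gamma_{i+1}+q\gamma_i\gamma_{i-1}$.

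To avoid relying on the precise normalisation in Katz--Kutler, I would also sketch a direct verification. The relation is equivalent to
\[
\gamma_i\cdot\big(q(\gamma_i-\gamma_{i-1})-(\gamma_{i+1}-\gamma_i)\big)=0 .
\]
The first step is to compute the difference $D_i:=q(\gamma_i-\gamma_{i-1})-(\gamma_{i+1}-\gamma_i)$ from the formula above.
\begin{itemize}
\item The $\alpha$-coefficient is $q\cdot(-q^{i-1})+q^i=0$.
\item For a flat with $\codim F\ge i+1$, the $x_F$-coefficient is $-q\,q^{i-1}+q^i=0$.
\item For $\codim F=i$, only the $\gamma_{i-1}$ and $\gamma_{i+1}$ terms are nonzero, and they contribute $-q\big((i)_q-(i-1)_q\big)=-q^i$.
\end{itemize}
Up to sign convention, then, $D_i=-q^i\sum_{\codim F=i}x_F$. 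The relation thus reduces to showing $\gamma_i\cdot x_F=0$ for every $k$-rational flat $F$ of codimension exactly $i$.

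The second step, which I expect to be the main point, is to verify this vanishing. I would choose a good representative for $\gamma_i$: it is the pullback of a base-point-free class, namely the hypersimplex or Plücker class, so it can be written as a nonnegative combination of $x_G$ supported away from $F$. Concretely, pick a $k$-rational hyperplane $H$ containing $F$, so that $\alpha=\sum_{G\subseteq H}x_G$ by the linear relations. Substituting this into $\gamma_i$, the coefficient of $x_G$ becomes
\begin{itemize}
\item $(n+1)_q-(\codim G)_q$ for $G\subseteq H$ with $\codim G\ge i$;
\item $(n+1)_q-(i)_q$ for $G\subseteq H$ with $\codim G<i$;
\item $(i)_q-(\codim G)_q$ for $G\not\subseteq H$ with $\codim G\ge i$.
\end{itemize}
Since $x_Fx_G=0$ for incomparable pairs, the product $x_F\gamma_i$ only involves $G$ comparable with $F$. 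For such $G$ one then uses the standard relation $x_F\cdot\alpha_{F}$-type restriction. Concretely, $x_F$ restricted to the divisor $E_F\cong \mathrm{Wond}(\PG(\text{upper}))\times\mathrm{Wond}(\PG(\text{lower}))$ lets us compute $\gamma_i|_{E_F}$ as a sum of hypersimplex classes on the two factors. On the factor corresponding to flats containing $F$, the rank parameter is extremal, so that factor's hypersimplex class is trivial. The remaining factor's class has index $0$ or full rank, hence is zero. This product decomposition of hypersimplex classes restricted to boundary divisors, as in \cite{BergetSpinkTseng}, is the step I would most need to check carefully. If it proves awkward, I would fall back on simply quoting \cite[Lemma~7.10]{KatzKutler} and doing the specialisation above.
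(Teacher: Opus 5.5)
Your primary route is the paper's own: the paper obtains the lemma simply by specialising \cite[Lemma~7.10]{KatzKutler} to $\PG(n,q)$, and your specialisation (flat sizes $(k)_q$, coefficients $q^{i-1}(q+1)$, $q^{i-1}$, $q^{i}$, then dividing by $q^{i-1}$) is correct. In your optional direct check, the reduction to $\gamma_i x_F=0$ for $\codim F=i$ is right, but you should not represent $\alpha$ by a single hyperplane $H\supseteq F$; instead write $((n+1)_q-(i)_q)\alpha=\sum_{H'\not\supseteq F}\sum_{G\subseteq H'}x_G$, summing over the $(n+1)_q-(i)_q$ hyperplanes not containing $F$, which gives every $x_G$ with $G$ comparable to $F$ coefficient zero in $\gamma_i$, so the vanishing follows from $J_{n,q}$ alone.
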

We are now in a position to establish the main claim.
\begin{proof}[Proof of Theorem~\ref{th:algebra-isoms}]
    First, we check that Pl\"ucker classes $L_i$ also satisfy a $q$-Klyachko relation, we have
    \begin{align*}
        (q+1) L_i^2 & = \frac{q+1}{q^{2i}} \gamma_i^2 = \frac{1}{q^{2i}}(\gamma_i \gamma_{i+1}+ q \gamma_i \gamma_{i-1})\\
        & = \frac{1}{q^{2i}}(q^{2i+1} L_i L_{i+1}+ q^{2i} L_i L_{i-1}) = q L_{i} L_{i+1} + L_i
        L_{i-1},
    \end{align*} by Lemma~\ref{lem:gamma-L-diff}.
    Therefore, the map $\iota^*\colon \CH(\Fl_{n+1}) \to \CH(\DL_n)$ factors through the map $\tau\colon \Kly_{n,q}\to \CH(\DL_n)$ that sends $u_i$ to $L_{n-i}$. Since the Chow ring $\CH(\Fl_{n+1})$ is generated by $L_i$, we get a surjection \[
        \tau\colon \Kly_{n,q} \twoheadrightarrow \im \iota^*.
    \]
    But \cite[Theorem~7.12]{KatzKutler} combined with Lemma~\ref{lem:gamma-L-diff} shows the degree map on $\CH(\DL_n)$ restricts to the degree map on $\Kly_{n,q}$ given by $q$-divided symmetrisation, so $\tau$ must also inject.
\end{proof}

\section{The toric orbifold $X_{\Sigma_{n,q}}$}\label{sec:toric-orbifold}

\subsection{A $q$-deformation of the type $A$ Cartan matrix}
We give a different construction that realises the $q$-Klyachko algebra. The plan is to exhibit the $q$-Klyachko as the Stanley--Reisner ring of a toric orbifold, and then the K{\"a}hler package follows from Hodge theory for simple polytopes.

Assume now $q \in \QQ_{>0}$. Consider the $n$-by-$n$ matrix,
\[
    A(n,q) =
    \begin{pmatrix}
        q+1 & -1 &  & &   \\
        -q & \ddots & \ddots &  &  \\
        & \ddots &  \ddots & -1 \\
        & & -q & q+1
    \end{pmatrix}
\]
where the omitted entries are zero.
When $q=1$, this is the Cartan matrix of the root system of type $A_{n}$. This matrix also appeared
in \cite[Lemma~3.6]{NadeauTewari}. The following list of properties is clear:
\begin{lemma}\label{lem:qCartanSubmatNonsing}
    \begin{enumerate}
        \item The matrix $A(n,q)$ has determinant $(n+1)_q$.
        \item For any subset $J\subseteq [n]$, the submatrix $A(n,q)_{i,j\in J}$ is nonsingular.
        \item The matrix $A(n,q)$ satisfies $(A(n,q)^{-1})_{ij}> 0$ for all $1\le i,j \le n$.
        \item
            For any subset $J\subseteq [n]$, the submatrix $A_J = A(n,q)_{i,j\in J}$ satisfies $(A_J^{-1})_{ij}\ge 0$ for all $i,j\in J$.
    \end{enumerate}
\end{lemma}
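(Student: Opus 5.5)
The plan is to treat $A(n,q)$ as a tridiagonal matrix. Every principal submatrix $A_J$ is then block diagonal. If we split $J$ into maximal runs of consecutive integers, the corresponding blocks are exactly matrices of the form $A(m,q)$ with $m$ the length of the run. This is because the entries of $A(n,q)$ are translation invariant along the diagonal, and non-adjacent indices contribute zero entries. Items (ii) and (iv) therefore reduce to items (i) and (iii) for the smaller matrices $A(m,q)$, $m\le n$. For (iv), the inverse of a block diagonal matrix is block diagonal, with blocks the inverses of the $A(m,q)$. By (iii) these blocks have positive entries, and all other entries are zero, hence $\ge 0$. For (ii), $\det A_J=\prod (m_r+1)_q\neq 0$ since $q>0$.

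For (i) we expand the determinant along the last row. Write $D_m=\det A(m,q)$. The standard tridiagonal recurrence gives $D_m=(q+1)D_{m-1}-q D_{m-2}$, where the off-diagonal product is $(-1)(-q)=q$, with $D_0=1$ and $D_1=q+1$. The sequence $(m+1)_q$ satisfies the same recurrence, because $(m+1)_q-(m)_q=q\bigl((m)_q-(m-1)_q\bigr)=q^m$. Induction then gives $D_m=(m+1)_q$.

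For (iii) the cleanest route is to write down the inverse explicitly via cofactors. For a tridiagonal matrix, the $(i,j)$ cofactor with $i\le j$ equals, up to sign, the determinant of the leading $(i-1)$ block times the product of the superdiagonal entries $a_{i,i+1}\cdots a_{j-1,j}$ times the determinant of the trailing $(n-j)$ block. The trailing block is again of the form $A(n-j,q)$, so its determinant is $(n-j+1)_q$. Tracking signs, each superdiagonal entry is $-1$, and this exactly cancels the cofactor sign $(-1)^{i+j}$. The result should be
\[
(A^{-1})_{ij}=\frac{(i)_q\,(n+1-j)_q}{(n+1)_q}\quad (i\le j),\qquad (A^{-1})_{ij}=\frac{q^{\,i-j}(j)_q\,(n+1-i)_q}{(n+1)_q}\quad (i> j),
\]
with the factor $q^{i-j}$ coming from the subdiagonal entries $-q$. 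Every factor is positive for $q>0$, which proves (iii).

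The main obstacle is keeping the signs and the choice of sub- versus superdiagonal straight in this cofactor formula. As a safeguard, I would check the claimed inverse directly by verifying $A\cdot A^{-1}=I$ column by column. Off the diagonal, this check is precisely the three-term recurrence for $(m)_q$ used in (i). An alternative for (iii) that avoids explicit formulas is to note that $A(n,q)$ is a nonsingular M-matrix: it is a Z-matrix that is irreducible, and after symmetric diagonal rescaling by $q^{i/2}$ it becomes diagonally dominant with strict dominance in the first row. Irreducible nonsingular M-matrices have entrywise positive inverses, so this would give (iii) as well.
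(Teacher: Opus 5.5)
Your proof is correct, and it supplies an argument the paper does not give. The paper simply declares the four properties ``clear''. It cites Nadeau--Tewari for the matrix and, for (iv), alludes to the $q=1$ case via the Lusztig--Tits nonnegativity of inverse Cartan matrices.

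Your block decomposition of $A_J$ into copies of $A(m_r,q)$, one for each maximal run in $J$, correctly reduces (ii) and (iv) to (i) and (iii). The recurrence $D_m=(q+1)D_{m-1}-qD_{m-2}$ settles (i). Your closed form for $A(n,q)^{-1}$ is right: for $n=2$ it gives $\frac{1}{(3)_q}$ times the rows $(q+1,\,1)$ and $(q,\,q+1)$, which is indeed the inverse. So (iii) holds with strict positivity.

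Two small corrections.
\begin{enumerate}
\item With the usual convention that the $(i,j)$ cofactor deletes row $i$ and column $j$, the cofactor with $i<j$ carries the \emph{sub}diagonal product. What you describe is the $(j,i)$ cofactor, which is the one that actually enters $(A^{-1})_{ij}$; your final formula already reflects this.
\item In the direct check $A\cdot A^{-1}=I$, the off-diagonal entries do reduce to $(m+1)_q=(q+1)(m)_q-q(m-1)_q$, as you say. The diagonal entries, however, need the separate identity $(j+1)_q(n+1-j)_q-q\,(j)_q(n-j)_q=(n+1)_q$. This is a routine consequence of $(x+y)_q=(x)_q+q^x(y)_q$.
\end{enumerate}

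On your two routes for (iii): the explicit inverse gives more information, namely a closed formula. The M-matrix argument is the genuine $q$-analogue of the Lusztig--Tits fact the paper alludes to, and it is even simpler than you suggest. No rescaling is needed: $A(n,q)$ itself has row sums $q,0,\dots,0,1$. So it is weakly diagonally dominant with strict dominance in rows $1$ and $n$, and it is irreducible, hence an irreducible nonsingular M-matrix. Moreover, every principal submatrix of a nonsingular M-matrix is again a nonsingular M-matrix, with nonnegative inverse. So this route yields (ii) and (iv) directly, without the block decomposition.
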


Recall that when $q=1$, the matrix $A(n,1)_{i,\,j\in J}$ is a Cartan matrix of the root system of
type $A_n$ for any $J\subseteq [n]$. Then Item~(iv) above is an illustration of the fact that the inverse $[A(n,1)_{i,\,j\in J}]^{-1}$ consists of
nonnegative rational numbers~\cite{LusztigTitsInverseCartan}.

\subsection{The simplicial fan $\Sigma_{n,q}$: construction and properties}

Denote by $e_i$ the standard basis vector in $N=\ZZ^n$, and let $\alpha_1,\,\dots,\,\alpha_n\in \QQ^n$ be the column vectors of $A(n,q)$.

Let $\Sigma_{n,q}$ be the set of cones of the form
\[
    \sigma_{J,K} = \cone\{e_i : i\in J\}-\cone \{\alpha_k : k\in K\}
\] for disjoint subsets $J,K\subsetneq [n]$. Here, our convention is that $\cone \emptyset$ is the origin $\{0\}$, so that $\sigma_{\emptyset,\emptyset} = \{0\}$. As a matter of notation, we write $A =
A(n,q)$ and $A_J = A_{i,j\in J}$.

Using Lemma~\ref{lem:qCartanSubmatNonsing}, one can easily observe:
\begin{lemma}\label{lem:strong-convex-rational}
    For $J,K\in [n]$ so that $J\cap K = \emptyset$, the
    cone $\sigma_{J,K}$ is a strongly convex rational polyhedral cone in $N_\RR$ of dimension $\dim \sigma_{J,K} = \# J + \# K$. Moreover, given $\sigma_{J,K},\sigma_{P,Q}\in \Sigma_{n,q}$, we have $\sigma_{J,K} \cap \sigma_{P,Q} = \sigma_{J\cap P, K\cap Q}$.
\end{lemma}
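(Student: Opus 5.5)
The plan is to check that the generators $\{e_j : j\in J\}\cup\{-\alpha_k : k\in K\}$ of $\sigma_{J,K}$ are linearly independent. Once this holds, $\sigma_{J,K}$ is simplicial of dimension $\#J+\#K$ and therefore strongly convex. Rationality is automatic because $A(n,q)$ has rational entries.

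For independence, suppose $\sum_{j\in J} a_j e_j - \sum_{k\in K} b_k\alpha_k = 0$ and look only at the coordinates indexed by $K$. Since $J\cap K=\emptyset$, the vectors $e_j$ contribute nothing there. The $K$-coordinates of the $\alpha_k$ are the columns of $A_K$, so $A_K b = 0$. By Lemma~\ref{lem:qCartanSubmatNonsing}(ii), $A_K$ is nonsingular, hence $b=0$, and then $a=0$.

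For the intersection formula, the inclusion $\sigma_{J\cap P,K\cap Q}\subseteq \sigma_{J,K}\cap\sigma_{P,Q}$ is clear. For the reverse inclusion, take
\[
v=\sum_{J} a_j e_j-\sum_K b_k\alpha_k=\sum_P c_p e_p-\sum_Q d_q\alpha_q
\]
with all coefficients nonnegative. The idea is to split $[n]$ by comparing supports and to use the sign pattern of $A$: diagonal entries are positive and off-diagonal entries are $\le 0$, so $A$ is an M-matrix. Each of $A_K^{-1}$, $A_Q^{-1}$ is entrywise $\ge 0$ by Lemma~\ref{lem:qCartanSubmatNonsing}(iv).

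First, set $\beta=\sum_K b_k e_k$ and $\delta=\sum_Q d_q e_q$, so that $A(\delta-\beta)=\sum_P c_pe_p-\sum_J a_je_j$. Write $x=\delta-\beta$. Let $S=\{i : x_i>0\}$, which is contained in $Q$. On $S$, the vector $(Ax)_S = A_S x_S + A_{S,S^c}x_{S^c}$ is a sum of a term from an M-matrix and $-A_{S,S^c}x_{S^c}$ with $x_{S^c}\le0$. That second contribution, $A_{S,S^c}x_{S^c}$, is $\ge0$, so $(Ax)_S\ge A_Sx_S$.

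Next, $(Ax)_i$ for $i\in S$ equals $c_i$ if $i\in P$, $-a_i$ if $i\in J$, and $0$ otherwise. I would split into cases using disjointness: $S\subseteq Q$ is disjoint from $P$. Hence $(Ax)_S\le 0$, so $A_S x_S\le 0$. Applying $A_S^{-1}\ge0$ gives $x_S\le0$, which forces $S=\emptyset$. Symmetrically $x\ge 0$, so $\beta=\delta$.

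Since $\beta$ is supported on $K$ and $\delta$ on $Q$, $b=d$ is supported on $K\cap Q$. Then $\sum_J a_je_j=\sum_P c_pe_p$ gives $a=c$ supported on $J\cap P$. Hence $v\in\sigma_{J\cap P,K\cap Q}$.

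The main obstacle is this sign argument. The delicate point is confirming that $(Ax)_S\le0$ uses only the fact that $S\subseteq Q$ avoids $P$. I would double-check that the monotonicity step $A_S^{-1}\ge 0$ handles ties where $x_i=0$ correctly.
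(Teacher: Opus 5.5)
Your proof is correct and is essentially the argument the paper has in mind: the paper only says the lemma follows from Lemma~\ref{lem:qCartanSubmatNonsing}. You use part (ii) for linear independence, which gives simpliciality, strong convexity and the dimension count, and part (iv), together with the nonpositive off-diagonal entries of $A(n,q)$, for the intersection formula. There is one small wording slip: you describe the second term as $-A_{S,S^c}x_{S^c}$, but in fact $(Ax)_S = A_Sx_S + A_{S,S^c}x_{S^c}$ with a plus sign, which is the version you then correctly use.
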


\begin{corollary}\label{cor:simplicial-fan}
    The set $\Sigma_{n,q}$ is a simplicial fan. In other words, the toric variety $X_{\Sigma_{n,q}}$ associated to the fan $\Sigma_{n,q}$ is $\QQ$-factorial.
\end{corollary}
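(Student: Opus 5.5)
The corollary says $\Sigma_{n,q}$ is a fan whose cones are all simplicial. I would check the fan axioms directly from Lemma~\ref{lem:strong-convex-rational}; $\QQ$-factoriality then follows from the standard toric fact that a toric variety is $\QQ$-factorial exactly when its fan is simplicial.

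\emph{Step 1: each cone is simplicial.} Fix disjoint $J,K\subsetneq[n]$. The cone $\sigma_{J,K}$ is generated by the $\#J+\#K$ vectors $\{e_i\}_{i\in J}\cup\{-\alpha_k\}_{k\in K}$. By Lemma~\ref{lem:strong-convex-rational} its dimension is $\#J+\#K$, so these generators are linearly independent. Hence $\sigma_{J,K}$ is simplicial, strongly convex and rational.

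\emph{Step 2: closure under faces.} The faces of a simplicial cone are exactly the cones spanned by subsets of its generators, because the generators are linearly independent. So every face of $\sigma_{J,K}$ has the form $\sigma_{J',K'}$ with $J'\subseteq J$ and $K'\subseteq K$. Such $J',K'$ are again disjoint proper subsets of $[n]$, so each face lies in $\Sigma_{n,q}$.

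\emph{Step 3: intersections are common faces.} For two cones, Lemma~\ref{lem:strong-convex-rational} gives $\sigma_{J,K}\cap\sigma_{P,Q}=\sigma_{J\cap P,K\cap Q}$. The generators of the right-hand side form a subset of the generators of $\sigma_{J,K}$. By Step~2 it is therefore a face of $\sigma_{J,K}$, and by the same argument a face of $\sigma_{P,Q}$.

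Steps 1--3 show that $\Sigma_{n,q}$ is a simplicial fan. The subtle input is the intersection formula, which rests on the nonnegativity properties in Lemma~\ref{lem:qCartanSubmatNonsing}. It is already contained in Lemma~\ref{lem:strong-convex-rational}, so the main obstacle has been handled there, and the corollary itself is bookkeeping.

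One point to keep straight: "rational" is meant with respect to the lattice $N=\ZZ^n$. The vectors $\alpha_k$ have rational entries, so one rescales each ray generator to its primitive lattice vector. Rescaling changes neither the cone nor the simplicial property.
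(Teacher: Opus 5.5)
Your proposal is correct and follows the paper's own proof. Both use closure under faces, the intersection formula $\sigma_{J,K}\cap\sigma_{P,Q}=\sigma_{J\cap P,K\cap Q}$ from the second assertion of Lemma~\ref{lem:strong-convex-rational}, and simpliciality from the dimension count in its first assertion. Your only deviation is to establish simpliciality first, which justifies the paper's unproved claim that every face of $\sigma_{J,K}$ is some $\sigma_{J',K'}$.
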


\begin{proof}
    Firstly, we check that $\Sigma_{n,q}$ is a fan. Indeed, given a cone $\sigma_{J,K}\in
    \Sigma_{n,q}$, every nonempty face is given by the cone $\sigma_{J',K'}\in \Sigma_{n,q}$ for
    $J'\subseteq J$ and $K'\subseteq K$. Secondly, by the second assertion of Lemma~\ref{lem:strong-convex-rational}, the intersection of any two cones $\sigma_{J,K}$ and $\sigma_{P,Q}$ in $\Sigma_{n,q}$ is a face of both, namely, $\sigma_{J,K} \cap \sigma_{P,Q}$.
    Finally, by the first assertion of Lemma~\ref{lem:strong-convex-rational}, the fan $\Sigma_{n,q}$ is simplicial.
\end{proof}

Next, we show the toric variety $X_{\Sigma_{n,q}}$ is projective. The argument goes by first establishing completeness and then detecting amplitude \textit{via} the toric version of Kleiman's criterion.
\begin{lemma}\label{lem:complete-fan}
    The fan $\Sigma_{n,q}$ is complete.
\end{lemma}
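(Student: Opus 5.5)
To prove that $\Sigma_{n,q}$ is complete, I will show that every $v\in N_\RR=\RR^n$ lies in some cone $\sigma_{J,K}$. Writing $v=\sum_{j\in J} a_j e_j - \sum_{k\in K} b_k\alpha_k$ with $a_j,b_k\ge 0$ is the same as finding a vector $b\in\RR^n_{\ge 0}$ supported on some $K$ such that
\[
a \coloneqq v + A b \ge 0 \quad\text{componentwise},\qquad a_i b_i = 0 \text{ for all } i.
\]
Setting $J=\operatorname{supp}(a)$, this is exactly a linear complementarity problem $\mathrm{LCP}(v,A)$: find $b\ge 0$ with $v+Ab\ge 0$ and $b^{T}(v+Ab)=0$. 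Disjointness of $J$ and $K$ is then automatic. If $K=[n]$ were forced, complementarity would give $v+Ab=0$. In that case $v=-Ab$ with $b\ge0$, so $v$ lies in $-\cone\{\alpha_k\}$, and we can shrink $K$ by noting that $v$ is then on the boundary of smaller cones. I will handle this degenerate case separately below.

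The key input is the standard theorem that $\mathrm{LCP}(v,A)$ has a (unique) solution for every $v$ whenever $A$ is a P-matrix, meaning all principal minors are positive. So the main step is to show that $\det A_J>0$ for all $J\subseteq[n]$. Lemma~\ref{lem:qCartanSubmatNonsing}(ii) gives $\det A_J\neq 0$. For positivity, I will deform $q$ continuously: along any path in $q>0$ the determinant never vanishes, and at $q=1$ the principal submatrices are Cartan matrices of type $A$, which have positive determinants. Alternatively, I can argue directly: $A_J$ is block diagonal with tridiagonal blocks of the form $A(m,q)$, and each has determinant $(m+1)_q>0$ by Lemma~\ref{lem:qCartanSubmatNonsing}(i). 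Either route shows that $A$ is a P-matrix.

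With the P-matrix property in hand, apply the theorem to obtain $b$, and set $K=\operatorname{supp}(b)$ and $J=\operatorname{supp}(v+Ab)$. This gives $v\in\sigma_{J,K}$. The constraint in the definition of the fan is that $J$ and $K$ are proper subsets. If $K=[n]$, then $J=\emptyset$ and $v=-Ab$ with every $b_k>0$. Here I use Lemma~\ref{lem:qCartanSubmatNonsing}(iii), which says $A^{-1}>0$ entrywise. Then $-v=Ab$, so $-A^{-1}v = b$. I must instead write $v$ using some $e_j$'s. One observation is that $\sum_k b_k\alpha_k = Ab$ has positive entries when $b$ is suitably chosen, but it need not in general. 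So instead I will perturb: the union of the cones is closed, being a finite union of closed cones. If I can show that the open set of $v$ with a solution having $K\ne[n]$ and $J\ne[n]$ is dense, completeness follows. The case $J=[n]$ is the positive orthant, which gives $v$ in the interior of $\cone\{e_i\}$, and that cone is fine up to its boundary. The set of $v$ with $K=[n]$ is the open cone $-A(\RR^n_{>0})$, so I cannot argue by density. I therefore expect this properness issue to be the main obstacle, and I will resolve it by verifying that $\cone\{\alpha_k\}$ meets $\RR^n_{\geq 0}$ in a way making $-\cone\{\alpha_k:k\in[n]\}$ covered by the cones $\sigma_{J,K}$ with $K\subsetneq[n]$ and $J\neq\emptyset$. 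Concretely, $\sum_k \alpha_k = (q, 0,\dots,0,1)^T$ up to the signs coming from the boundary rows. Since the column sums of $A$ have nonnegative entries, I will use a pivoting (exchange) argument to trade one $\alpha_k$ for nonnegative combinations of the $e_j$'s. If this fails, I will reread the definition as allowing $K=[n]$.
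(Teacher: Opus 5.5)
Your main argument is correct and takes a genuinely different route from the paper. Writing $v=a-Ab$ with $a,b\ge 0$ and $a_ib_i=0$ is exactly the statement that $v$ lies in some $\sigma_{J,K}$ with $J\cap K=\emptyset$. Every principal submatrix $A_J$ is block diagonal, with blocks $A(m,q)$ on the maximal runs of consecutive indices of $J$. So Lemma~\ref{lem:qCartanSubmatNonsing}(i) gives $\det A_J=\prod(m+1)_q>0$, and the Samelson--Thrall--Wesler theorem then supplies $b$. Keep this block route: the continuity route needs $\det A_J\neq 0$ for all \emph{real} $q>0$, which Lemma~\ref{lem:qCartanSubmatNonsing}(ii), stated for rational $q$, does not literally provide.

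The paper instead imports the proof of Abe--Zeng's Proposition~4.3. Its only problem-specific inputs are the two assertions of Lemma~\ref{lem:strong-convex-rational}: each $\sigma_{J,K}$ is simplicial of dimension $\#J+\#K$, and two cones meet in a common face. So completeness there is extracted from the fan structure rather than from an explicit decomposition of $v$. Your route gives more. The uniqueness half of the same theorem also yields $\sigma_{J,K}\cap\sigma_{P,Q}=\sigma_{J\cap P,K\cap Q}$, and nonvanishing of principal minors gives simpliciality. Thus $\Sigma_{n,q}$ is just the complementary fan of the P-matrix $A(n,q)$, and Lemmas~\ref{lem:strong-convex-rational} and~\ref{lem:complete-fan} come out together. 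The paper's route avoids LCP theory but rests on Lemma~\ref{lem:strong-convex-rational}, whose proof it leaves to the reader.

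Your last paragraph should be deleted: the plan there cannot succeed, and it is not needed. The ``$\subsetneq$'' in the definition of $\Sigma_{n,q}$ is a typo for ``$\subseteq$''. Three places in the paper confirm this:
\begin{itemize}
\item Figure~\ref{fig:KLY22} lists $\sigma_{12,\emptyset}$ and $\sigma_{\emptyset,12}$ as maximal cones.
\item The proof of Proposition~\ref{prop:positive-inter} uses $\sigma_{J\cup\ell,K}$ with $K$ possibly empty.
\item The Stanley--Reisner ideal in the proof of Theorem~\ref{thm:toric-isom} is generated by the $X_iY_i$ alone.
\end{itemize}
Read literally, the lemma is false, and your own tools show it. Suppose $v=-Ab$ with every $b_k>0$ and also $v\in\sigma_{J,K}$. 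The coefficient vector $b'$ of that second representation is another solution of $\mathrm{LCP}(v,A)$, so uniqueness forces $b'=b$ and hence $K=[n]$. Thus the open cone $-A(\RR^n_{>0})$ meets no cone with $K\subsetneq[n]$, and no exchange or pivoting argument can cover it. Likewise the open positive orthant meets only $\sigma_{[n],\emptyset}$. So calling the case $J=[n]$ ``fine'' was inconsistent: the literal definition excludes it just as it excludes $K=[n]$. Your fallback is therefore the intended definition, not a fallback. With $J,K\subseteq[n]$, your proof is finished as soon as you set $K=\operatorname{supp}(b)$ and $J=\operatorname{supp}(v+Ab)$.
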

\begin{proof}
    The proof here is the same as the one in
    \cite[Proposition~4.3]{abe2023petersonvarietiestoricorbifolds}. The proof in \textit{loc.cit.}
    goes through upon replacing references to
    \cite[Lemma~3.3 and Lemma~3.5]{abe2023petersonvarietiestoricorbifolds} by the first and second
    assertion of Lemma~\ref{lem:strong-convex-rational}.
\end{proof}
Considering the orbit-cone correspondence for toric varieties~\cite[\S3.2]{CLS}, we denote by $D_{-\alpha_i}$ the torus invariant Weil divisor on $X_{\Sigma_{n,q}}$ corresponding to the ray $\sigma_{\emptyset,i}=\cone (-\alpha_i)$ for $i\in [n]$. Similarly, denote by $D_{e_k}$ the torus invariant Weil divisor that corresponds to the ray $\sigma_{k,\emptyset}=\cone e_k$.

\begin{proposition}\label{prop:positive-inter}
    For a torus invariant irreducible curve $C \subseteq X_{\Sigma_{n,q}}$, and an invariant Weil divisor $D_{-\alpha_i}$ that corresponds to the ray generator $-\alpha_i$, the intersection number $(D_{-\alpha_i}\cdot C)$ is nonnegative. Moreover, given an invariant curve $C$, there exists $\ell\in [n]$ such that the corresponding Weil divisor $D_{-\alpha_\ell}$ satisfies $(D_{-\alpha_\ell}\cdot C) > 0$.
\end{proposition}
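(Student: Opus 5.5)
The plan is to use the standard description of torus-invariant curves and their intersection numbers on a simplicial complete toric variety (\cite[Proposition~6.4.4]{CLS}) and to reduce everything to the nonnegativity of the entries of $A_K^{-1}$ in Lemma~\ref{lem:qCartanSubmatNonsing}(iv).

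\textbf{Walls and maximal cones.} By Corollary~\ref{cor:simplicial-fan} and Lemma~\ref{lem:complete-fan}, $X_{\Sigma_{n,q}}$ is a complete $\QQ$-factorial toric variety. Hence every invariant irreducible curve is $C=V(\tau)$ for a wall $\tau$ of dimension $n-1$. By Lemma~\ref{lem:strong-convex-rational}, such a wall is $\tau=\sigma_{J,K}$ with $J\sqcup K=[n]\setminus\{m\}$ for a unique index $m$. The maximal cones are exactly the $\sigma_{J',K'}$ with $J'\sqcup K'=[n]$, so $\tau$ is a face of exactly the two maximal cones $\sigma_{J\cup\{m\},K}$ and $\sigma_{J,K\cup\{m\}}$. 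The two rays not in $\tau$ are therefore $e_m$ and $-\alpha_m$.

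\textbf{The wall relation.} By \cite[Proposition~6.4.4]{CLS}, there is a unique linear relation
\[
a\,e_m + b\,(-\alpha_m) + \sum_{j\in J} a_j e_j + \sum_{k\in K} b_k(-\alpha_k)=0, \qquad a,b>0.
\]
For each ray $\rho$, the number $(D_\rho\cdot C)$ equals a positive constant times the coefficient of $\rho$ in this relation; the constant depends only on $\tau$ and the two adjacent cones, through lattice multiplicities. Rays not in $\tau\cup\{e_m,-\alpha_m\}$ have $(D_\rho\cdot C)=0$. I will take the relation and the positivity of the normalising factor directly from \emph{loc.\ cit.}

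\textbf{Computing the coefficients.} Reading off the $m$-th column of $A(n,q)$ gives $\alpha_m=(q+1)e_m-e_{m-1}-q\,e_{m+1}$, with the convention $e_0=e_{n+1}=0$. Hence
\[
-\alpha_m+(q+1)e_m=v, \qquad v\coloneqq e_{m-1}+q\,e_{m+1}\in\RR^n_{\ge0}.
\]
Next I write $-v=\sum_{j\in J}a_je_j+\sum_{k\in K}b_k(-\alpha_k)$. Taking the coordinates indexed by $K$, the vectors $e_j$ with $j\in J$ contribute nothing, while $(\alpha_k)_{k\in K}$ restricted to the rows in $K$ is the matrix $A_K$. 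This gives $v_K=A_Kb$, so
\[
b=A_K^{-1}v_K\ge 0
\]
entrywise, by Lemma~\ref{lem:qCartanSubmatNonsing}(ii),(iv). The $J$-coordinates then determine the $a_j$, whose signs are irrelevant here. The wall relation therefore reads
\[
(q+1)e_m+(-\alpha_m)+\sum_{j\in J}a_je_j+\sum_{k\in K}b_k(-\alpha_k)=0 .
\]

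\textbf{Conclusion.} It follows that $(D_{-\alpha_m}\cdot C)>0$, since its coefficient is $1$. For $k\in K$ we get $(D_{-\alpha_k}\cdot C)\ge 0$, since $b_k\ge 0$. For $i\notin K\cup\{m\}$ we get $(D_{-\alpha_i}\cdot C)=0$, since $-\alpha_i$ is not a ray of either adjacent cone. This proves both assertions, with $\ell=m$.

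The only delicate step is matching the sign and normalisation conventions of \cite[Proposition~6.4.4]{CLS}: the coefficients of the two non-wall rays must be positive, and the proportionality constant must be positive. Once that is fixed, the rest is the linear-algebra computation above.
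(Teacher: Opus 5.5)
\textbf{The gap.} The step that fails is your claim that $-v$ can be written as $\sum_{j\in J}a_je_j+\sum_{k\in K}b_k(-\alpha_k)$, that is, that the wall relation has coefficients $q+1$ and $1$ on $e_m$ and $-\alpha_m$.

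That decomposition is a system of $n$ coordinate equations in only $n-1$ unknowns. You solved the $K$-coordinates for $b$ and the $J$-coordinates for the $a_j$, but you never checked the $m$-th coordinate. It requires $\sum_{k\in K}A_{mk}b_k=0$. Every term of this sum is $\le 0$. If $m\pm1\in K$, the corresponding term is strictly negative, since $A_{m,m\pm1}<0$ and $b_{m\pm1}>0$ (the diagonal of $A_K^{-1}$ is positive and $v_{m\pm1}>0$). So the decomposition does not exist as soon as $K\cap\{m-1,m+1\}\neq\emptyset$.

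Concretely, take $n=q=2$ and the wall $\tau=\cone(-\alpha_2)$, so $m=1$, $J=\emptyset$, $K=\{2\}$. Then $-v=-2e_2$ is not a multiple of $-\alpha_2=e_1-3e_2$. The true wall relation is $\tfrac73e_1+(-\alpha_1)+\tfrac23(-\alpha_2)=0$. In general, once the coefficient of $-\alpha_m$ is normalised to $1$, the coefficient of $e_m$ is the Schur complement $\det A_{K\cup\{m\}}/\det A_K$, not $q+1$. So your displayed relation is not a linear relation at all, and the appeal to uniqueness in \cite[Proposition~6.4.4]{CLS} has nothing to apply to.

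\textbf{The repair.} The fix is short and lands essentially on the paper's argument: do not prescribe the $e_m$-coefficient. Start from the relation supplied by \cite[Proposition~6.4.4]{CLS}, normalised as $a\,e_m+(-\alpha_m)+\sum_{j\in J}a_je_j+\sum_{k\in K}b_k(-\alpha_k)=0$ with $a>0$ unknown. Since $e_m$ and the $e_j$ with $j\in J$ have vanishing $K$-coordinates, the $K$-coordinates give $A_Kb=-(\alpha_m)_K=v_K$ whatever $a$ is. Hence $b=A_K^{-1}v_K\ge0$, exactly as you computed; the $m$-th coordinate merely determines $a$. Your conclusions then follow as you stated them.

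The paper does the same bookkeeping with the index set $K\cup\{m\}$ in one step. It pairs the wall relation, whose coefficients are the intersection numbers, with $e_i$ for $i\in K\cup\{m\}$. This gives $\sum_{k\in K\cup\{m\}}A_{ik}(D_{-\alpha_k}\cdot C)=\delta_{im}(D_{e_m}\cdot C)$. So the vector of intersection numbers is $(D_{e_m}\cdot C)$ times a column of $A_{K\cup\{m\}}^{-1}$, which is nonnegative by Lemma~\ref{lem:qCartanSubmatNonsing}(iv). That version does not need the explicit shape of $\alpha_m$.
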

\begin{proof}
    Under the orbit-cone correspondence, a torus invariant irreducible curve $C$ corresponds to a codimension-one cone of the form $\sigma_{J,K}$ with $J\cup K = [n]\setminus \ell$ for some $\ell$. The cone $\sigma_{J,K}$ is contained in the maximal cones $\sigma_{J\cup \ell,K}$ and $\sigma_{J,K\cup \ell}$. By \cite[Proposition~6.4.4]{CLS}, there exists a unique up to homothety wall relation among the $n+1$ ray generators in $\sigma_{J\cup \ell,K}$ and $\sigma_{J,K\cup \ell}$, given by the linear relation
    \begin{align}\label{eq:wall-relation}
        - (D_{-\alpha_\ell}\cdot C) \alpha_\ell - \sum_{j\in J} (D_{-\alpha_j}\cdot C) \alpha_j + \sum_{k\in K}(D_{e_k}\cdot C) e_k + (D_{e_\ell}\cdot C) e_\ell = 0;
    \end{align} moreover, the intersection numbers $(D_{-\alpha_\ell}\cdot C)$ and $(D_{e_\ell}\cdot C)$ are strictly positive---this proves the second assertion. Put $J' = J\cup \ell$. Pairing Eq.~\eqref{eq:wall-relation} against $e_i$ for $i\in J'$, we obtain a system of linear equations \[
        \sum_{j\in J'} A_{ij} (D_{-\alpha_j}\cdot C) = \delta_{i\ell}\,(D_{e_i}\cdot C)\,\text{for all } i\in J'.
    \] By Lemma~\ref{lem:qCartanSubmatNonsing}, the submatrix $A_{J'}$ is nonsingular, and its inverse $A_{J'}^{-1}$ has nonnegative entries. Solving for the linear equation and noting that $(D_{e_i}\cdot C)>0$, we have $(D_{-\alpha_j}\cdot C)\ge 0$ for all $j\in J'$. This yields the first assertion.
\end{proof}
By Lemma~\ref{lem:complete-fan}, the toric variety $X_{\Sigma_{n,q}}$ is complete. Recall that the toric Kleiman criterion asserts that if $D$ is a Cartier divisor on a complete toric variety $X$, then $D$ is ample if and only if $(D\cdot C) > 0$ for all torus invariant irreducible curves $C$.
Combining this with Proposition~\ref{prop:positive-inter} and the fact that $X_{\Sigma_{n,q}}$ is $\QQ$-factorial (Corollary~\ref{cor:simplicial-fan}), one immediately deduces
\begin{corollary}\label{cor:ample-subcone}
    Given positive rational numbers $a_i\in\QQ_{>0}$ for $1\le i\le n$, the $\QQ$-divisor $D = \sum_{i=1}^n a_i D_{-\alpha_i}$ is $\QQ$-ample. That is, a sufficiently large multiple of $D$ is Cartier and ample.
\end{corollary}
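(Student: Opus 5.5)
The plan is to combine the toric Kleiman criterion with Proposition~\ref{prop:positive-inter}, using $\QQ$-factoriality to make sense of intersection numbers of $\QQ$-divisors.

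\emph{Step 1: $D$ is $\QQ$-Cartier.} By Corollary~\ref{cor:simplicial-fan} the fan $\Sigma_{n,q}$ is simplicial, so every torus-invariant Weil divisor on $X_{\Sigma_{n,q}}$ is $\QQ$-Cartier. Clearing the denominators of the $a_i$ gives an integral divisor $D'=mD=\sum_i (ma_i)D_{-\alpha_i}$ with $ma_i\in\ZZ_{>0}$. Enlarging $m$ by a further integer factor, which is possible because the class group modulo the Picard group is torsion for simplicial fans, makes $D'$ Cartier. Intersection numbers $(D\cdot C)$ with torus-invariant curves are then defined by linearity as $\sum_i a_i(D_{-\alpha_i}\cdot C)$, and these agree with $\frac1m(D'\cdot C)$.

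\emph{Step 2: positivity on every invariant curve.} Fix a torus-invariant irreducible curve $C$. By the first assertion of Proposition~\ref{prop:positive-inter}, each term $a_i(D_{-\alpha_i}\cdot C)$ is nonnegative, since $a_i>0$. By the second assertion, there is some $\ell$ with $(D_{-\alpha_\ell}\cdot C)>0$, so the $\ell$-th term is strictly positive. Hence $(D'\cdot C)=m(D\cdot C)>0$.

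\emph{Step 3: conclude with Kleiman.} By Lemma~\ref{lem:complete-fan}, $X_{\Sigma_{n,q}}$ is complete. The toric Kleiman criterion then applies to the Cartier divisor $D'$: since $(D'\cdot C)>0$ for every invariant curve $C$, the divisor $D'$ is ample, so $D$ is $\QQ$-ample.

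The only delicate point is Step~1, namely ensuring that the Kleiman criterion is applied to a genuine Cartier divisor rather than merely a $\QQ$-Cartier one. Simpliciality handles this: on a simplicial fan one can produce the local linear forms $m_\sigma$ on each maximal cone after scaling by the index of the lattice generated by the ray generators, so some integral multiple of $D$ is Cartier.
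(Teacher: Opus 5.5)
Your proposal is correct and follows the same argument as the paper. The paper combines completeness of the fan, $\QQ$-factoriality to pass to a Cartier multiple, and the toric Kleiman criterion with the nonnegativity and strict positivity from Proposition~\ref{prop:positive-inter}; you additionally spell out the details (clearing denominators, and the finite index of the Picard group in the class group for a simplicial fan) that the paper leaves implicit as an immediate deduction.
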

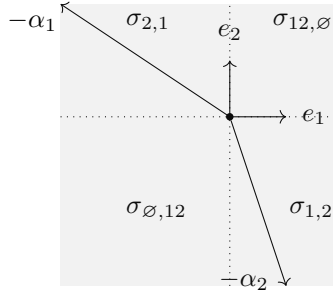
\begin{figure}
    \begin{center}
        \begin{tikzpicture}[scale=0.75]
            \node[circle,fill=black,scale=0.3]  (0) at (0, 0) {};
            \node  (1) at (1, 0) {};
            \node  (2) at (0, 1) {};
            \node  (3) at (-3, 2) {};
            \node  (4) at (1, -3) {};
            \node[below right] at (-2,2) {$\sigma_{2,1}$};
            \node[below left]  (5) at (2, 2) {$\sigma_{12,\emptyset}$};
            \node[above left]  (6) at (2, -2) {$\sigma_{1,2}$};
            \node[above right]  (7) at (-2, -2) {$\sigma_{\emptyset,12}$};
            \node  (8) at (0, 1.5) {$e_2$};
            \node  (9) at (0.25, -2.9) {$-\alpha_2$};
            \node  (10) at (1.5, 0) {$e_1$};
            \node  (11) at (-3.5, 1.7) {$-\alpha_1$};
            \draw[dotted] (-3,0) to (2,0);
            \draw[dotted] (0,-3) to (0,2);
            \draw [->] (0,0) to (1,0);
            \draw [->]  (0,0) to (0,1);
            \draw [->] (0,0) to (-3,2);
            \draw [->] (0) to (1,-3);
            \fill[gray,opacity=0.1] (-3,-3) rectangle (2,2);
        \end{tikzpicture}
    \end{center}
    \caption{Illustration of the fan $\Sigma_{2,2}$, maximal cones labelled by two-part partitions of~$[2]$.}
    \label{fig:KLY22}
\end{figure}
\begin{example}[Case $n=q=2$]
    In Figure~\ref{fig:KLY22}, we visualise the fan $\Sigma_{2,2}$ in $N_\R\simeq\RR^2$. The $4$ maximal cones of $\Sigma_{2,2}$ are the labelled regions. The standard basis vectors $e_i$ generate the cone $\sigma_{i,\emptyset}$, for $i=1,2$. The remaining two vectors are $-\alpha_1 = -3 e_1 + 2 e_2$ and $-\alpha_2 = e_1 - 3e_2$, which gives the cones $\sigma_{\emptyset,1}$ and $\sigma_{\emptyset,2}$ respectively. One observes that $\Sigma_{2,2}$ is the normal fan of a lattice polytope that is combinatorially equivalent to a $2$-dimensional cube.
\end{example}

\subsection{Cohomology of the toric orbifold $X_{\Sigma_{n,q}}$}
This subsection is devoted to proving Theorem~\ref{thm:toric-isom}.
Since the toric variety $X_{\Sigma_{n,q}}$ is $\QQ$-factorial and projective, the rational Betti
cohomology ring $\HH^\bullet(X_{\Sigma_{n,q}},\QQ)$ can be computed by Stanley--Reisner theory~\cite[Chap.~12]{CLS}.
\begin{proof}[Proof of Theorem~\ref{thm:toric-isom}]
    By \cite[Section~12.4]{CLS}, we have the ring isomorphism
    \begin{align*}
        \on{SR}\colon \QQ[X_i,Y_i & : i\in [n]]/(\sci_{n,q}+\scj_{n,q}) \lra \HH^\bullet(X_{\Sigma_{n,q}},\QQ);\\
        X_i & \mapsto [D_{-\alpha_i}],\quad Y_i\mapsto [D_{e_i}],\quad\text{for all $i\in [n]$,}
    \end{align*}
    where the ideal $\sci_{n,q}$ is generated by quadrics $X_iY_i$ for all $i\in [n]$, and the ideal $\scj_{n,q}$ is generated by linear forms \(-\sum_{1\le j\le n} A(n,q)_{ij} X_j + Y_i,\) for all $i\in [n]$.
    By our definition of $A(n,q)$, the ring map $\on{SR}$ factors through an isomorphism \[
        \on{SR}'\colon \QQ[X_1,\dots,X_n]/\sci'_{n,q} \lra \HH^\bullet (X_{\Sigma_{n,q}},\QQ),
        \quad X_i \mapsto [D_{-\alpha_i}],
    \] where the ideal $\sci_{n,q}'\subseteq \QQ[X_1,\dots,X_n]$ is generated by quadrics \[
        X_i\,((q+1)X_i - X_{i+1} - q X_{i-1}),\quad\text{for $1\le i\le n$,}
    \] where we set $X_{0} = X_{n+1} = 0$. But the $q$-Klyachko algebra
    $\Kly_{n,q}$ is isomorphic to the source $\QQ[X_1,\dots,X_n]/\sci_{n,q}'$ of $\on{SR}'$ by
    sending $u_i$ to $X_i$. The second assertion of the theorem follows from Corollary~\ref{cor:ample-subcone}.
\end{proof}

\subsection{A K{\"a}hler package for the $q$-Klyachko algebra}

In this subsection, we use the Hodge theory for simplicial polytopes to prove Corollary~\ref{thm:kaehlerPackage}.
\begin{definition}\label{def:kaehlerPackage}
    Let $A^\bullet = \bigoplus_{i=0}^d A^i$ be a graded Artinian  $\QQ$-algebra of finite type, in addition to an isomorphism $\deg\colon A^d \to \QQ$. We say that $(A^\bullet,\deg)$ \textit{satisfies the K{\"a}hler package} with respect to an element $\ell\in A^1$ if the following conditions are satisfied,
    \begin{enumerate}
        \item (Poincar{\'e} duality) For $0\le k\le d/2$, the bilinear form \[
                A^k \times A^{d-k} \lra \QQ,\quad (\eta_1,\eta_2)\mapsto \deg \eta_1\eta_2
            \] is a perfect pairing.
        \item (Hard Lefschetz theorem) For every integer $0\le k\le d/2$, the multiplication map
            \[
                \times \ell^{d-2k}\colon A^k \lra A^{d-k},\quad \eta \mapsto \ell^{d-2k}\eta
            \] is an isomorphism.
        \item (Hodge index theorem) For every integer $0\le k\le d/2$, the bilinear form \[
                A^k \times A^k \lra \QQ,\quad (\eta_1,\eta_2) \mapsto (-1)^k \deg(\ell^{d-2k}\eta_1\eta_2)
            \] is positive definite upon restricting to the kernel of multiplication by $\ell^{d-2k+1}$.
    \end{enumerate}
\end{definition}

\begin{proof}[Proof of Corollary~\ref{thm:kaehlerPackage}]
    For a simplicial lattice polytope $P$ with normal fan $\Sigma_P$, Stanley and McMullen~\cite{StanleySimple,McMullenSimple} showed that on the toric variety $X_{\Sigma_P}$ associated to the fan $\Sigma_P$, the rational Betti cohomology ring $\HH^\bullet (X_{\Sigma_P},\QQ)$ satisfies Poincar{\'e} duality and the hard Lefschetz theorem with respect to any ample divisor class $\ell \in \HH^2 (X_{\Sigma_P},\QQ)$. Moreover, in \textit{loc.cit.}, McMullen also established the Hodge index theorem for any ample class $\ell$.
    Since the fan $\Sigma_{n,q}$ is projective and simplicial, Item~(i) follows. Concerning Item~(ii), by~\cite[Lemma~12.5.2]{CLS}, we have $
    \int_{X_{\Sigma_{n,q}}} D_{-\alpha_1}\cdots D_{-\alpha_n}  = {|\det A(n,q)|}^{-1} = {(n)_q!}^{-1}$.
    Therefore we have \[\int_{X_{n,q}} \prod_i D_{-\alpha_i}^{\eta(i)} = \frac{1}{[(n)_q!]^{2}}\deg_{n,q} \prod_i u_i^{\eta(i)} = \frac{p([n];\eta)}{(n)_q!}\] for any $\eta\colon [n]\to \ZZ_{\ge 0}$ with $\sum_i \eta(i) = n$. 
\end{proof}
\bibliographystyle{alpha}
\bibliography{refs}
\end{document}